\documentclass[reqno, 11pt]{amsart}

\setlength{\marginparwidth}{0.8in}

\usepackage[letterpaper,hmargin=1in,vmargin=1.2in]{geometry}
\usepackage{amsmath, amssymb, amsthm, verbatim, url}
\usepackage{graphicx}
\usepackage{enumerate}
\usepackage{amsmath,amssymb,amsthm,mathrsfs,graphicx,url}
\usepackage[usenames,dvipsnames]{color}
\usepackage[colorlinks=true,linkcolor=Red,citecolor=Green]{hyperref}

\setlength{\textheight}{8.50in} \setlength{\oddsidemargin}{0.00in}
\setlength{\evensidemargin}{0.00in} \setlength{\textwidth}{6.40in}
\setlength{\topmargin}{0.00in} \setlength{\headheight}{0.18in}
\setlength{\abovedisplayskip}{0.2in}
\setlength{\belowdisplayskip}{0.2in}
\setlength{\parskip}{0.05in}

\newcommand{\bs}{\backslash}

\newcommand \R {\mathbb{R}}

\DeclareMathOperator \re {Re}
\DeclareMathOperator \im {Im}

\DeclareMathOperator \supp {supp}

\newtheorem{prop}{Proposition}
\newtheorem{lem}[prop]{Lemma}

\newtheorem*{thm}{Theorem}
\numberwithin{equation}{section}
\numberwithin{prop}{section}
\numberwithin{figure}{section}

\author{Kiril Datchev}
\email{kdatchev@purdue.edu}
\address{Department of Mathematics, Purdue University,
West Lafayette, IN 47907, USA}

\author{Maarten V. de Hoop}
\email{mdehoop@purdue.edu}
\address{Department of Mathematics, Purdue University,
West Lafayette, IN 47907, USA}

\title[Iterative reconstruction  for the wave equation with bounded
frequency data]{Iterative reconstruction of the wavespeed for the wave equation with bounded
frequency boundary data}

\begin{document}

\begin{abstract}

We study the inverse boundary value problem for the wave equation
using the single-layer potential operator as the data. We assume that
the data have frequency content in a bounded interval. We prove how to
choose classes of nonsmooth coefficient functions so that optimization
formulations of inverse wave problems satisfy the prerequisites for
application of steepest descent and Newton-type iterative methods.
\end{abstract}

\maketitle \setcounter{page}{1}

\section{Introduction}

In this paper, we study the inverse boundary value problem for the
wave equation using the single-layer potential operator as the
data. We assume that the data have frequency content in a bounded
interval. The mentioned inverse boundary value problem arises, for example, in
reflection seismology \cite{Bao2005a, Symes2009, Bao2010}. 

We show how to choose classes of nonsmooth coefficient functions so
that optimization formulations of inverse wave problems satisfy the
prerequisites for application of steepest descent and Newton-type
iterative methods. Indeed, we establish the existence of a misfit
functional derived from the Hilbert-Schmidt norm and its gradient. The
proof is based on resolvent estimates for the corresponding Helmholtz
equation, exploiting the fact that the frequencies are contained in a bounded
interval.

Via conditional Lipschitz stability estimates for the time-harmonic
inverse boundary value problem, which we established in earlier work
\cite{BdHQ}, we can then guarantee convergence of the iteration
if it is initiated within a certain distance of the (unique) solution
of the inverse boundary value problem. Indeed, such a convergence of a
nonlinear projected steepest descent iteration was obtained in
\cite{dqs}.

In our scheme we can allow approximate localization of the data in
selected time windows, with size inversely proportional to the maximum allowed frequency. This is of importance to applications
in the context of reducing the complexity of field data and thus of
the underlying coefficient functions. Note that no information is lost by cutting out a short time window, since our source functions (and solutions), being compactly supported in frequency, are analytic with respect to time.

Uniqueness of the mentioned inverse boundary value problem for the
Helmholtz equation, that is, using single-frequency data, was
established by Sylvester and Uhlmann \cite{su} assuming that
the wavespeed is a bounded measurable function. This inverse problem
has also been extensively studied from an optimization point of
view. We mention, in particular, the work of \cite{Hadj-Ali2008}.

This paper can be viewed as a counterpart of the work by Blazek, Stolk
\& Symes \cite{bss} in the sense that we consider bounded frequency
data. That is, we cannot allow arbitrarily high frequencies in the
data. Without this restriction they observed that the adjoint equation did not admit
solutions: this problem does not appear in our formulation through the
use of resolvent estimates.

\subsection*{Multi-frequency data}

The multi-frequency data are obtained from solutions to the
corresponding boundary value problem for the wave equation by applying
a Fourier transform (see \cite{Lasiecka1983} for regularity of hyperbolic equations in such settings). Let $\Omega$ be a bounded smooth domain in
$\mathbb{R}^3$ and $c \in L^\infty(\mathbb R^3)$ be a strictly positive bounded
measurable function, constant outside of $\Omega$.  We consider the inhomogeneous problem for the
wave equation
\begin{equation}\label{e:wave}
\partial_t^2 u - c^2 \Delta u = f dS , 
\end{equation}
where $f \in L^2( \mathbb R; H^{-1/2}(\partial \Omega))$ is compactly supported in frequency, and $dS$ is the surface measure on $\partial \Omega$ inherited from $\mathbb R^3$. 

Multi-frequency data have been exploited in so-called frequency
progression in iterative schemes for the purpose of regularization
\cite{Bunks1995, Sirgue2004, Bao2007, Chen2009}. Frequency progression
can be realized in our approach by gradually enlarging the frequency
interval of the data; see also \cite{Simons}.

\subsection*{Reflection seismology and optimization}

Iterative methods for the inverse boundary value problem for the wave
equation, in reflection seismology, have been collectively referred to
as full waveform inversion (FWI). (The term `full waveform inversion'
was presumably introduced by Pan, Phinney and Odom in \cite{Pan1988}
with reference to the use of full seismograms information.) Lailly
\cite{Lailly1983} and Tarantola \cite{Tarantola1984, Tarantola1987}
introduced the formulation of the seismic inverse problem as a local
optimization problem with a least-squares ($L^2$) minimization of a
misfit functional. We also mention the original work of Bamberger,
Chavent \& Lailly \cite{Bamberger1977, Bamberger1979} in the
one-dimensional case. Since then, a range of alternative misfit
functionals have been considered; we mention, here, the criterion
derived from the instantaneous phase used by Bozdag, Trampert and
Tromp \cite{Bozdag2011}. The time-harmonic formulation was initially
promoted by Pratt and collaborators in \cite{Pratt1990, Pratt1991}.
Later the use of complex frequencies was studied in \cite{Shin2009,
  Ha2010}. In FWI one commonly applies a ``nonlinear'' conjugate
gradient method, a Gauss-Newton method, or a quasi-Newton method
(L-BFGS; for a review, see Brossier \cite{Brossier2009}).

\subsection*{Single-layer potential operator as a Hilbert-Schmidt
             operator}
The
introduction of the single-layer potential operator is motivated by
what seismologists refer to as the process of source
``blending''. This becomes clear upon introducing a Hilbert-Schmidt
norm for this operator, which we justify in the development of a
misfit criterion: Basis functions of the underlying Hilbert space are
viewed as blended sources. The use of ``simultaneous'' sources in
linearized inverse scattering was studied by Dai and Schuster
\cite{Dai2009}, and in full waveform inversion, for example, by Vigh
and Starr \cite{Vigh2008} (synthesizing source plane waves), Krebs
\textit{et al.}  \cite{Krebs2009} (random source encoding) and Gao,
Atle and Williamson \cite{Gao2010} (deterministic source encoding).

Berkhout, Blacquire and Verschuur \cite{Berkhout2008-1,Berkhout2008-2}
considered simple time delays for the blending process, allowing the
use of conventional sources in acquisition. The process of source
blending has appeared in various acquisition (and imaging)
strategies. Perhaps the most basic form involves synthesizing source
plane waves from point source data in plane-wave migration
\cite{Whitmore1995}. So-called controlled illumination
\cite{Rietveld1994} can also be viewed as a particular blending
strategy. In blended acquisition, typically, time-overlapping point
source experiments, are generated in the field by
using incoherent source arrays; for simultaneous source firing, see
Beasley, Chambers and Jiang \cite{Beasley1998} and for near
simultaneous source firing, see Stefani, Hampson and Herkenhoff
\cite{Stefani2007}. The use of simultaneous random sources have been
proposed, further, by \cite{Neelamani2010} and others.

\subsection*{Conditional Lipschitz stability estimates}

It is well known that the logarithmic character of stability of the
inverse boundary value problem for the Helmholtz equation
\cite{Alessandrini1988, Novikov2011} cannot be avoided. In fact, in
\cite{Mandache2001} Mandache proved that despite of regularity or
a-priori assumptions of any order on the unknown wavespeed,
logarithmic stability is optimal. However, conditional Lipschitz
stability estimates can be obtained: for example, accounting for discontinuities,
such an estimate holds if the unknown wavespeed is a finite linear
combination of piecewise constant functions with an underlying known
domain partitioning \cite{BdHQ}. It was obtained following an
approach introduced by Alessandrini and Vessella
\cite{Alessandrini2005} and further developed by Beretta and Francini
\cite{Beretta2011} for Electrical Impedance Tomography (EIT).

The relationship between the single-layer potential operator and the
Dirichlet-to-Neumann map can be found in Nachman
\cite{Nachman1988-1}. Using this relationship, it follows that
conditional Lipschitz stability using the Dirichlet-to-Neumann map as
the data implies conditional Lipschitz stability using the
single-layer potential operator as the data. Note that this stability result is the only place in our proof that we need the Dirichlet-to-Neumann map.

\subsection*{Resolvent estimates}

We control the forward operator via resolvent estimates for the Helmholtz equation. In our low-regularity setting it is well known that the resolvent norm may go to infinity exponentially in frequency as energy goes to infinity: a famous example is the square well potential which goes back to Gamow (see e.g. \cite[Theorem 2.25]{dz}). It is known in very general smooth settings that this growth is the worst that may occur: see results of Burq \cite{burq, bu}, Cardoso and Vodev \cite{cv}, and Rodnianski and Tao \cite{rt}. In \cite{d} this is proved in a lower regularity setting, and in \S\ref{s:res} below we give a generalization of this result to certain piecewise constant wavespeeds. We also give an improved estimate by cutting off away from a sufficiently large compact set: this kind of improvement has been observed before in \cite{bu, cv, rt}.

\subsection*{Main result}

\begin{figure}
\includegraphics[width=13cm]{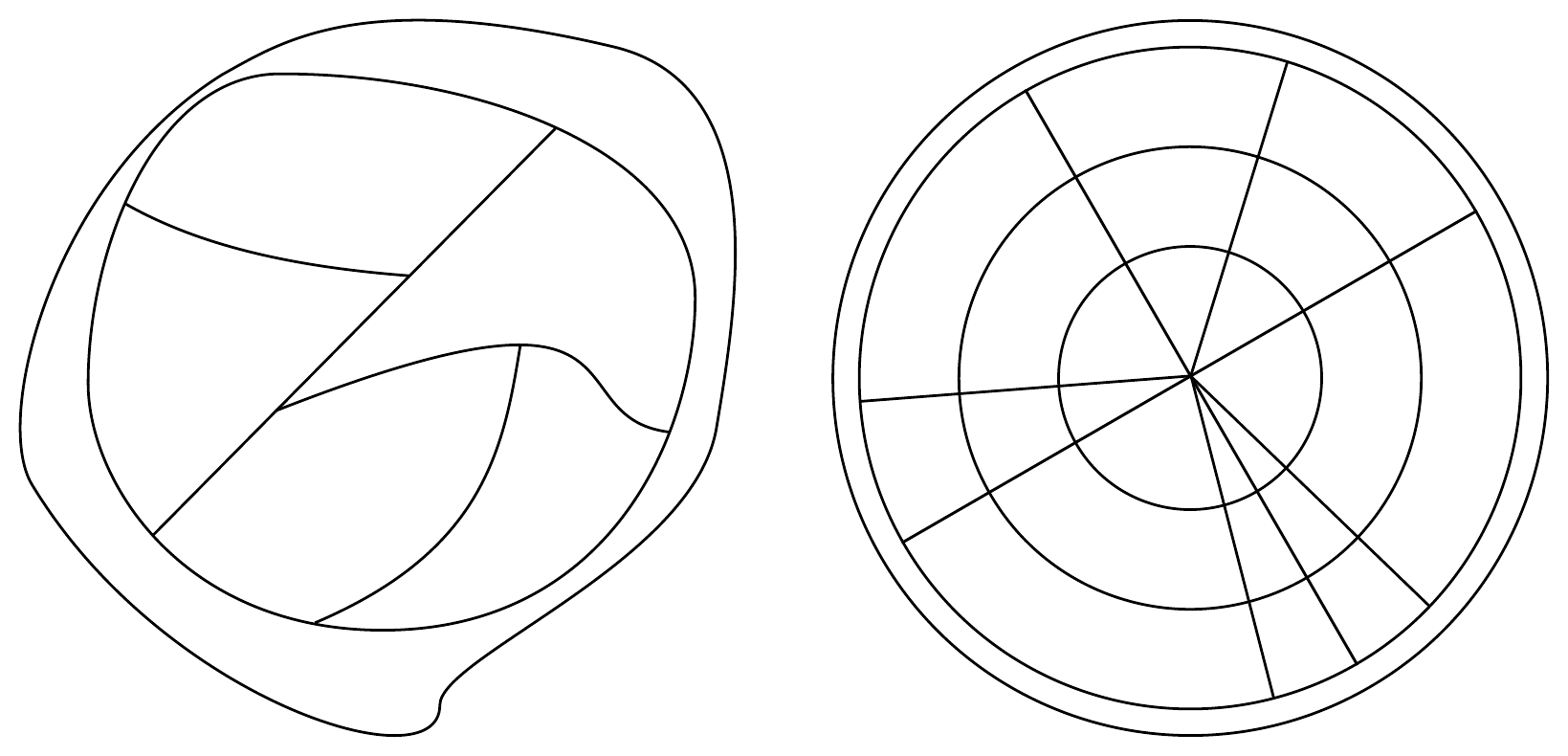}
\caption{The Theorem applies to wavespeeds which are piecewise constant with respect to subdomains as shown. The example on the left has $N=5$ and the example on the right has $N=24$. Part (1) of the Theorem (which says that the iteration converges exponentially fast) applies to both examples, but part (2) (which gives bounds for large frequencies) applies only to the example on the right.}
\end{figure}

Let $\Omega$ be a bounded domain in $\mathbb R^3$ with smooth boundary, and let $\{D_0, \dots D_N\}$ be a finite collection pairwise disjoint open subsets of $\Omega$ with Lipschitz boundaries, satisfying $\cup_{j=0}^N \overline{D_j} = \overline \Omega$ and  $\overline D_j \cap \partial \Omega \ne \varnothing \Rightarrow j=0$.

Given positive real numbers $b_1, \dots b_N$, define a wavespeed $c$ given by $b_j$ on  $D_j$ and $1$ otherwise, and consider the following forward solution to \eqref{e:wave}:
\[
u(t):= U_cf(t):= \frac 1 {2\pi} \int e^{-it\lambda}(-c^2\Delta - (\lambda+i0)^2)^{-1}\hat f (\lambda) dS d\lambda, \qquad \hat f (\lambda): = \int e^{-it\lambda}f(t)dt,
\]
where $\hat f \in L^2(\mathbb R; H^{-1/2}(\partial \Omega))$ vanishes for $\lambda \not\in[-\lambda_0,\lambda_0]$ (see \eqref{e:udef}). Throughout the paper we take $\lambda_0\ge 1$.

Letting $\tau_{\partial \Omega}$ denote restriction to the boundary $\partial \Omega$, we take as our data knowledge of the operator
\[
F\colon X_0 \to Y, \qquad F(b_1,\dots b_N)= \tau_{\partial \Omega}(U_c - U_1),
\]
where $X_0: = (0,\infty)^N \subset X: = \mathbb R^N$ and $Y$ is a tensor product of $L^2$ functions on a time window with a Hilbert space of Hilbert--Schmidt operators. More specifically
\[
\|c\|_{X}^2 = \sum_{j=1}^N |b_j|^2, \qquad  \|F(c)\|_Y^2 = \sum_{j=1}^\infty \int_I \|\tau_{\partial \Omega} (U_{c} - U_{1})\psi_j\|^2_{H^{1/2}(\partial \Omega)} dt.
\]
Here $U_1$ is the free wave evolution (when $c \equiv 1$),  $\{\psi_j\}_{j=1}^\infty$ is any orthonormal basis of the space of sources $f$,  and $I$ is any interval of length $\pi/\lambda_0$ (see \S\ref{s:land}). Let $c^\dagger \in X_0$ be the ``true wavespeed''  which we wish to recover, and define Landweber iterates by
\[
c_{m+1} = c_m - \mu DF(c_m)^*(F(c_m) - F(c^\dagger)),
\]
for some $c_0 \in X_0$ and $\mu>0$ sufficiently small, where $DF(c_m)^*$ is the gradient of $F$ at $c_m$  (see \eqref{e:landit}). 

\begin{thm} Locally, the Landweber iteration converges exponentially fast. More specifically:
 \begin{enumerate} 
 \item If $\|c_0 - c^\dagger\|_X \le C_0$ and $C_0, \ \mu>0$ are sufficiently small, then there is $C_1$ such that $\|c_m - c^\dagger\|_X \le C_0 e^{-m/C_1}$. 
 \item If the boundaries of the subdomains $D_j$ respect polar coordinates, in the sense that the radial derivative of any wavespeed under consideration is supported on a union of spheres, then $C_0^{-1}$, $\mu^{-1}$, and  $C_1$, all grow at most exponentially in $\lambda_0$.
 \end{enumerate}
\end{thm}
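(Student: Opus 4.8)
The plan is to carry out the standard convergence analysis for nonlinear Landweber iteration, reducing the theorem to three estimates on $F$ and $DF$. Write $e_m := c_m - c^\dagger$ and use the fundamental theorem of calculus to factor $F(c_m) - F(c^\dagger) = G_m e_m$, where $G_m := \int_0^1 DF(c^\dagger + s e_m)\,ds$ maps $X$ to $Y$; then \eqref{e:landit} becomes
\[
e_{m+1} = \bigl(\Id - \mu\, DF(c_m)^* G_m\bigr)\, e_m .
\]
Since $X = \mathbb R^N$ is finite dimensional, $DF(c_m)^* G_m$ is an $N\times N$ matrix, and the entire argument amounts to showing that, near $c^\dagger$, it is a small perturbation of the symmetric positive definite matrix $M := DF(c^\dagger)^* DF(c^\dagger)$, so that for a suitable step length $\mu$ the map $\Id - \mu M$ is a contraction.

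The three inputs I would establish are: (i) $F\in C^1(X_0;Y)$ with $\|DF(c)\|\le\Lambda$; differentiating $U_c$ in the coefficients sandwiches an extra factor between two copies of the resolvent $(-c^2\Delta-(\lambda+i0)^2)^{-1}$ restricted to the band $\lambda\in[-\lambda_0,\lambda_0]$, so the resolvent estimates of \S\ref{s:res} make the bound finite. (ii) The second resolvent identity together with the same estimates gives a local Lipschitz bound $\|DF(c)-DF(\tilde c)\|\le L\|c-\tilde c\|$. (iii) The conditional Lipschitz stability of \cite{BdHQ}, transported from the Dirichlet-to-Neumann map to the single-layer potential via \cite{Nachman1988-1}, yields $\|F(c)-F(\tilde c)\|_Y\ge\kappa\|c-\tilde c\|_X$ for $c,\tilde c$ near $c^\dagger$; letting $\tilde c\to c$ converts this into the linearized lower bound $\|DF(c)h\|_Y\ge\kappa\|h\|_X$, so the spectrum of $M$ lies in $[\kappa^2,\Lambda^2]$.

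Given these, part (1) is routine. Choosing $\mu<\Lambda^{-2}$ gives $\|\Id-\mu M\|\le 1-\mu\kappa^2$, while (i)--(ii) yield $\|DF(c_m)^* G_m - M\|\le C\|e_m\|$ with $C$ controlled by $\Lambda$ and $L$. Hence $\|e_{m+1}\|\le(1-\mu\kappa^2+\mu C\|e_m\|)\|e_m\|$, and if $C_0$ is small enough that $C C_0\le\tfrac12\kappa^2$ then the contraction factor is $\rho:=1-\tfrac12\mu\kappa^2<1$. Induction preserves $\|e_m\|\le C_0$ --- in particular the iterates remain in the open positivity cone $X_0$ --- and gives $\|c_m-c^\dagger\|_X\le C_0\rho^m=C_0e^{-m/C_1}$ with $C_1=1/|\log\rho|$, as in \cite{dqs}. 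For part (2) one tracks the frequency dependence: since $\mu^{-1}\sim\Lambda^2$, $C_0^{-1}\sim\Lambda L/\kappa^2$, and $C_1\sim 2\Lambda^2/\kappa^2$, it suffices that $\Lambda$, $L$ and $\kappa^{-1}$ each grow at most exponentially in $\lambda_0$, and under the polar-coordinate hypothesis this follows from the improved (cut-off) resolvent estimate of \S\ref{s:res}, whose proof uses separation of variables in the radial direction.

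I expect the crux to be the lower bound (iii) and its $\lambda_0$-dependence. The upper bounds (i)--(ii) are soft consequences of the resolvent estimates, but the linearized stability must be extracted from the conditional Lipschitz estimate of \cite{BdHQ}, and making its constant $\kappa^{-1}$ explicit and at most exponential in $\lambda_0$ is exactly where the radial structure is needed. Since the inverse problem is only logarithmically stable without the piecewise-constant restriction \cite{su}, controlling how fast $\kappa^{-1}(\lambda_0)$ may degrade --- uniformly over a neighborhood of $c^\dagger$ that itself shrinks no faster than exponentially --- is the delicate point on which part (2) rests.
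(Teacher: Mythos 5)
Your contraction scheme for part (1) is sound and is a genuinely different packaging from the paper's: the paper feeds the bounds of Lemma \ref{l:land} and the nonlinear stability estimate \eqref{e:hstab} (with $\epsilon=1$) into the abstract convergence result \cite[Theorem 3.2]{dqs} (Proposition \ref{p:dqs}), which also requires weak sequential closedness of $F$; your finite-dimensional perturbation argument around $M=DF(c^\dagger)^*DF(c^\dagger)$ avoids that hypothesis, and your derivation of the linearized lower bound $\|DF(c)h\|_Y\ge\kappa\|h\|_X$ from the nonlinear Lipschitz estimate (by letting $\tilde c\to c$) is legitimate. Granting your inputs (i)--(iii), the induction and the bookkeeping $\mu^{-1}\sim\Lambda^2$, $C_0^{-1}\sim\Lambda L/\kappa^2$, $C_1\sim\Lambda^2/\kappa^2$ are correct.

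The genuine gap is input (iii). You assert $\|F(c)-F(\tilde c)\|_Y\ge\kappa\|c-\tilde c\|_X$ as if it followed directly from \cite{BdHQ} plus the Nachman identity, but those results give stability only for the fixed-frequency single-layer potential operator $\mathcal{S}_{q,k}$, whereas $\|\cdot\|_Y$ is an $L^2$ norm, over a time window of length $\pi/\lambda_0$, of Hilbert--Schmidt norms of the time-domain operator. Bridging these two norms is the heart of the paper's proof of \eqref{e:hstab}: one chooses the orthonormal basis $\hat\psi_j=\hat a_\ell b_m$, where the $\hat a_\ell$ are modulated characteristic functions of $[-\lambda_0,\lambda_0]$ and $b_1$ nearly attains the operator norm of the single-layer difference; one uses the time-shift relation $U_c a_\ell b_m(t)=U_c a_0 b_m(t+\ell\pi/\lambda_0)$ so that summing the windowed norms over $\ell$ reconstitutes the full time integral; one applies Plancherel to convert this to a frequency integral; and one bounds that integral below by its value near a fixed frequency $\lambda_1\in(0,1)$ chosen to avoid the relevant Dirichlet spectra (a choice that also forces $\mathcal B$ to be small). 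None of this is in your outline, and it is exactly this argument that produces the quantitative dependence $\kappa^{-1}=O(\sqrt{\lambda_0})$.

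Moreover, your part (2) inverts the roles of the hypotheses. The polar-coordinate assumption is not what controls $\kappa^{-1}$: as just noted, $\kappa^{-1}$ grows only like $\sqrt{\lambda_0}$, with constants coming from the fixed-frequency estimates of \cite{BdHQ} and \cite{Nachman1988-1}, and its proof uses no radial structure at all. What the radial hypothesis buys is the exponential resolvent bound \eqref{e:expres} of Proposition \ref{p:exp2}, hence control of $a_c(\lambda_0)=1+\max_{|\lambda|\le\lambda_0}\|\chi_0 R_c(\lambda)\chi_0\|$, hence of the \emph{upper} constants $\hat L$ and $L$ of Lemma \ref{l:land}; for a general $L^\infty$ (even piecewise constant) wavespeed no growth control of $a_c(\lambda_0)$ in $\lambda_0$ is available, which is precisely why part (2) carries the extra hypothesis while part (1) does not. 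Note also that the estimate needed is \eqref{e:expres}, not the ``improved'' cut-off bound \eqref{e:linres}: the data live on $\partial\Omega$, where the cutoff $\chi_1$ required by \eqref{e:linres} is not allowed, and the paper explicitly remarks that \eqref{e:linres} is never used; nor does Proposition \ref{p:exp2} proceed by separation of variables --- it is a weight-function (Carleman-type) argument following \cite{d}.
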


Here $\psi_j$ can be viewed as the ``simultaneous'' sources in ``blended''
data acquisition; we should perhaps emphasize that there is no need to
extract the single-layer potential operator, that is to say no need for ``deblending''.

\subsection*{Outline of the paper} In \S\ref{s:dtnslp} we review some known results about the Helmholtz equation with a bounded measurable potential function, including the relationship between the Dirichlet-to-Neumann map and the single-layer potential operator. In \S\ref{s:res} we give high-energy resolvent estimates for certain classes of wavespeeds. In \S\ref{s:forward} we study the resolvent for a bounded measurable wavespeed, we introduce a forward solution operator for the wave equation with ``bandlimited'' data, and we compute its Fr\'echet derivative. In \S\ref{s:land} we give an abstract setting for a Landweber iteration, which reconstructs the wavespeed from the forward operator. We build in a time localization, and control the forward operator in a Hilbert--Schmidt sense. We compute the misfit functional and gradient in terms of the corresponding weighted $L^2$ inner product in time and a Hilbert--Schmidt inner product in space. In \S\ref{s:conv} we apply the results of the previous sections to prove the Theorem.

For most of the paper we work in greater generality than the setting described above in the statement of the Theorem: see the beginning of each section for the assumptions used in that section.

The authors are grateful to Maciej Zworski for helpful discussions about resolvent estimates.

\section{Modelling time-harmonic data: Dirichlet-to-Neumann map versus single-layer
         potential operator}\label{s:dtnslp}

In this section we review the relationship between the Dirichlet-to-Neumann map and the single-layer potential operator. In this paper we are principally concerned with the latter, and use the former only for stability estimates. It is only because we use the Dirichlet-to-Neumann map at all that, at a few places in our proof, we must assume we are away from the Dirichlet spectrum of certain operators on $\Omega$.  We expect stability estimates to hold for the single-layer potential operator itself without reference to the Dirichlet-to-Neumann map, and with such estimates in hand one would be able to remove all reference to Dirichlet spectra from the proof.

Here, we consider time-harmonic waves, described by solutions, $u$
say, of the Helmholtz equation on a bounded open domain $\Omega
\subset \R ^n$.  We write
\[
   \tilde{q}(\tilde x) = -\lambda^2 c^{-2}(\tilde x) ,
\]
where $\tilde x$ is a coordinate on $\mathbb R^n$ (we will use the notation $x$ without tilde for an element of a Hilbert space later in \S\ref{s:land}),
and keep $\lambda \in \R$ fixed; we assume that $c \in L^\infty
(\Omega)$ and that $c$ is bounded below by a positive constant. Note that in the rest of the paper we use the resolvent $R_c(\lambda)=(-c(\tilde x)^2 \Delta - \lambda^2)$, whose integral kernel differs from the Green's function of this section by a factor of $c(\tilde x)^2$.   We have the general formulation
\begin{equation}\label{Helmholtz-Lp}
\left\{
\begin{array}{rcl}
   (-\Delta + \tilde{q}(\tilde x))u &=& 0,\quad \tilde x \in \Omega ,
\\
   u &=& g ,\quad \tilde x \in \partial \Omega .
\end{array}
\right.
\end{equation}
Here, $g = g(\tilde x,\lambda)$ is a boundary source.

The data generated by the boundary sources, $g$, represent the
Dirichlet-to-Neumann map $\Lambda_q$ such that
\begin{equation*}
   \Lambda_{q} :\ H^{1/2}(\partial \Omega)
             \rightarrow H^{-1/2}(\partial \Omega),\quad
   g \to \left.
   \dfrac{\partial u}{\partial \nu} \right|_{\partial \Omega} ,
\end{equation*}
where $\nu$ represents the outward unit normal to $\partial \Omega$.
We assume that the boundary $\partial\Omega$ is in $C^{(1,1)}$ and
$\Omega' = \R^n \bs \overline{\Omega}$ is connected. We also assume
that $0$ is not a Dirichlet eigenvalue for $ -\Delta + \tilde{q}$ in
$\Omega$.

Seismic reflection data are generated by point sources on
$\partial\Omega$ and observed at points on $\partial\Omega$. In
preparation of a description of the data in terms of fundamental
solutions in $\R^n$, we extend $\tilde{q}(\tilde x)$ to a function with value
\begin{equation}
   -k^2 = -\lambda^2 c_0^{-2}
\end{equation}
in $\Omega'$. Let $G^+_k(\tilde x,\tilde y)$ be the outgoing Green's function for
the Helmholtz equation with constant coefficient, $c_0^{-2}$, in
$\R^n$, which is given by
\begin{eqnarray}
   G^+_k(\tilde x,\tilde y) & =& \frac{1}{(2 \pi)^n}
        \int \frac{e^{i (\tilde x-\tilde y) \xi}}{\xi^2 - k^2 - i 0} d \xi
\\
   &=& \frac{i}{4}
      \left(\frac{|k|}{2\pi |\tilde x-\tilde y|}\right)^{(n-2)/2}
               H^{(1)}_{(n-2)/2} (|k||\tilde x-\tilde y|) .
\nonumber
\end{eqnarray}
We set
\[
   q(\tilde x) = \tilde{q}(\tilde x) + k^2 ,
\]
which is compactly supported. We assume that $k^2$ is not a Dirichlet
eigenvalue of $-\Delta + q$ or of $-\Delta$ in $\Omega$. We let
$\mathcal{G}_{q,k}(\tilde x,\tilde y)$ be the solution of
\begin{equation}
   (-\Delta_{\tilde x} + q - k^2) \,  \mathcal{G}_{q,k}(\tilde x, \tilde y)
           = \delta(\tilde x-\tilde y) ,\quad \tilde x,\tilde y \in \R^n ,
\end{equation}
satisfying the Sommerfeld radiation condition as
$|\tilde x|\rightarrow\infty$. Restricting $\tilde x$ and $\tilde y$ to $\partial\Omega$
then yields the seismic reflection data:
\[
   \mathcal{A} = \{ \mathcal{G}_{q^{\dagger},k}( \tilde x,\tilde y) \mid\
           \tilde x, \tilde y \in \partial\Omega ,\ \tilde x \ne \tilde y \} ,
\]
if $q^{\dagger}(\tilde x)$ signifies the ``true'' model.

In the constant ``reference'' model with wave speed $c_0$, we
introduce the operator,
\[
   S^+_k :\ H^{1/2}(\partial\Omega) \to
                         H^{3/2}(\partial\Omega) ,
\]
by
\begin{equation}
   S^+_k w(\tilde x) = \int_{\partial\Omega} G^+_k(\tilde x,\tilde y) \, w(y)
                  \, d S(y) ,\quad \tilde x \in \partial\Omega ,
\end{equation}
which is bounded. Here, $d S$ is the natural area element on
$\partial\Omega$. In a general heterogeneous model, we introduce
\[
   \mathcal{S}_{q,k} :\ H^{1/2}(\partial\Omega) \to
                         H^{3/2}(\partial\Omega) ,
\]
with
\begin{equation}
   \mathcal{S}_{q,k} w(\tilde x) = \int_{\partial\Omega}
                 \mathcal{G}_{q,k}(\tilde x, \tilde y) \, w(\tilde y)
                 \, d S (\tilde y) ,\quad \tilde x \in \partial\Omega ,
\end{equation}
which is bounded also \cite[Theorem~1.6]{Nachman1988-1}. Data
generated by or synthesized with simultaneous sources are then
represented by
\[
   \mathcal{B} = \{ \mathcal{S}_{q^{\dagger},k} w \mid\
                       w \in H^{1/2}(\partial\Omega) \} .
\]

The single-layer potential operator, $S_{q^{\dagger},k}$, or data
$\mathcal{B}$, are equivalent to the Dirichlet-to-Neumann map,
$\Lambda_{q^{\dagger} - k^2}$, in the sense that they contain the same
information about $q^{\dagger}$. Indeed, from $S_k^+$, we can build
the relation between $\mathcal{S}_{q,k}$ and the Dirichlet-to-Neumann
map. We have
\begin{equation} \label{iden}
   \Lambda_{q-k^2} =  \Lambda_{-k^2}
            + \mathcal{S}_{q,k}^{-1} - (S_k^+)^{-1} ,
\end{equation}
or
\begin{equation}
  \mathcal{S}_{q,k} - S_k^+ 
      = -S_k^+ \,
            (\Lambda_{q-k^2} - \Lambda_{-k^2}) \, \mathcal{S}_{q,k} .
\end{equation}
This identity is defined on $H^{1/2}(\partial\Omega)$, and can be
derived from the resolvent equation,
\begin{equation}
   \mathcal{G}_{q,k}(\tilde x,\tilde y) = G^+_k (\tilde x-\tilde y)
   - \int_{\Omega} G^+_k (\tilde x-\tilde z) q(\tilde z) \mathcal{G}_{q,k} (\tilde z,\tilde y) \, d \tilde z .
\end{equation}
For $w \in H^{1/2}(\partial\Omega)$, we then find that
\begin{equation}
   \mathcal{S}_{q,k} w(\tilde x)-S^+_k w(\tilde x) =
     - \int_\Omega G^+_k(\tilde x-\tilde z) q(\tilde z) (\mathcal{S}_{q,k} w)(\tilde z) \, d \tilde z .
\end{equation}
From (\ref{iden}) we straightforwardedly obtain
\begin{equation}
   \Lambda_{q-k^2} - \Lambda_{q^{\dagger}-k^2}
      = \mathcal{S}_{q,k}^{-1} - \mathcal{S}_{q^{\dagger},k}^{-1} ,
\end{equation}
or
\begin{equation*}
   \Lambda_{q-k^2} - \Lambda_{q^{\dagger}-k^2}
   = -\mathcal{S}_{q^{\dagger},k}^{-1}
      (\mathcal{S}_{q,k} - \mathcal{S}_{q^{\dagger},k})
       \mathcal{S}_{q,k}^{-1} .
\end{equation*}

\subsection*{Conditional Lipschitz stability}

The convergence rate and convergence radius of our iterative scheme
are based on a conditional Lipschitz-type stability estimate for the
inverse problem:
\begin{equation}
   \| c^{-2} - {c^{\dagger}}^{-2} \|
   \leq C_S\, 
     \| \mathcal{S}_{\lambda^2 {c}^{-2},k} -\mathcal{S}_{\lambda^2 {c^\dagger}^{-2},k}  \|,
\label{eq:lip}
\end{equation}
where $c^{\dagger}$ is the true wavespeed. In case the wave speed is
piecewise constant \cite{BdHQ}, the above holds using the
Dirichlet-to-Neumann map as the data. However, if the inverse boundary
value problem with the Dirichlet-to-Neumann map as the data is
Lipschitz stable, then the inverse problem with the single-layer
potential operator as the data is Lipschitz stable.

Indeed, assume that $k^2$ is not a Dirichlet eigenvalue of $-\Delta +
q$ or of $-\Delta$ in $\Omega$. Then the inverse, $(S_k^+)^{-1}$, of
operator $S_k^+$ exists and is bounded, $H^{3/2}(\partial\Omega) \to
H^{1/2}(\partial\Omega)$. Moreover, the inverse,
$\mathcal{S}_{q,k}^{-1}$, of operator $\mathcal{S}_{q,k}$ exists and
is bounded, $H^{3/2}(\partial\Omega) \to H^{1/2}(\partial\Omega)$.
For a proof of this Proposition, see
\cite[Section~6]{Nachman1988-1}. Essentially, it follows that
\[
   \mathcal{S}_{q,k} = S_k^+ [I + (S_k^+)^{-1} (\mathcal{S}_{q,k} -
     S_k^+)]
\]
is invertible by showing that $-1$ cannot be an eigenvalue of
$(S_k^+)^{-1} (\mathcal{S}_{q,k} - S_k^+)$. It is possible to express
$\mathcal{S}_{q,k}^{-1}$ in terms of the difference of an interior and
an exterior Dirichlet-to-Neumann map, which are both bounded.

It is immediate that
\begin{equation*}
   \| \mathcal{S}_{q,k} - \mathcal{S}_{q^{\dagger},k} \| \le
   \| \mathcal{S}_{q,k} \| \,
              \| \mathcal{S}_{q^{\dagger},k} \| \,
   \| \Lambda_{q-k^2} - \Lambda_{q^{\dagger}-k^2} \| ,
\end{equation*}
while, using the statement above, it also follows that
\begin{equation}\label{e:equivstab}
   \| \Lambda_{q-k^2} - \Lambda_{q^{\dagger}-k^2} \| \le
   \| \mathcal{S}_{q,k}^{-1} \| \,
              \| \mathcal{S}_{q^{\dagger},k}^{-1} \| \,
        \| \mathcal{S}_{q,k} - \mathcal{S}_{q^{\dagger},k} \| .
\end{equation}
As a consequence, (conditional) Lipschitz stability for the
Dirichlet-to-Neumann map implies (conditional) Lipschitz stability for
the single-layer potential operator.

\section{Resolvent Estimates}\label{s:res}

Let $n \ge 3$, and let $c \in L^\infty(\mathbb R^3)$ be bounded below by a positive constant and be constant outside of a compact set. Then $-c^2\Delta$ is self-adjoint and nonnegative on $L^2(\mathbb R^n)$ with domain $H^2(\mathbb R^n)$, with respect to the inner product $\langle u, v \rangle = \int u \bar v c^{-2}$. Define the resolvent
\begin{equation}\label{e:resdef}
R_c(\lambda) := (-c^2\Delta - \lambda^2)^{-1} \colon L^2(\mathbb R^n) \to L^2(\mathbb R^n), \qquad \im \lambda>0.
\end{equation}

\begin{prop}\label{p:exp2} Let $c \in L^\infty$ be bounded below by a positive constant, and suppose $\partial_r c$ is a compactly supported measure which is bounded above by a radial measure, where $\partial_r$ is the radial vector field. There is a compact set $K \subset \mathbb R^n$ such that, for any $\chi_0, \ \chi_1 \in C_c^\infty(\mathbb R^n)$ with $\supp \chi_1 \cap K = \varnothing$, there are $C$ and $\lambda_1>0$ such that 
\begin{equation}\label{e:expres}
\|\chi_0 R_c(\lambda) \chi_0 \|_{L^2(\mathbb R^n) \to L^2(\mathbb R^n)} \le Ce^{C \re \lambda},
\end{equation}
and 
\begin{equation}\label{e:linres}
\|\chi_1 R_c(\lambda) \chi_1 \|_{L^2(\mathbb R^n) \to L^2(\mathbb R^n)} \le C/ \re \lambda,
\end{equation}
for all $\lambda$ with $\im \lambda >0$ and $\re \lambda \ge \lambda_1$.
\end{prop}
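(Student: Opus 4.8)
The plan is to derive both \eqref{e:expres} and \eqref{e:linres} from a single positive-commutator (Carleman) estimate in the semiclassical regime. Set $h = 1/\re\lambda$ and $z = (h\lambda)^2$, so that $\im z \ge 0$, $\re z = 1 + O(h)$ in the main regime $\im\lambda \lesssim \re\lambda$ (when $\im\lambda$ is comparable to or larger than $\re\lambda$ the resolvent is controlled by soft elliptic estimates), and $R_c(\lambda) = h^2 P(h)^{-1}$ with $P(h) = -h^2c^2\Delta - z$. It then suffices to prove a bounded-weight estimate
\[
\|e^{\phi/h}u\|_{L^2} \le \frac{C}{h}\,\|e^{\phi/h}P(h)u\|_{L^2},
\]
for a radial weight $\phi = \phi(r)$ and $u$ in the domain of $P(h)$: inserting the cutoffs $\chi_0$ and using $\min\phi \le \phi \le \max\phi$ converts the weights into the factor $e^{C/h} = e^{C\re\lambda}$ of \eqref{e:expres}, the extra powers of $h$ being absorbed into the exponential.

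To prove the weighted estimate I would study the conjugated operator $P_\phi = e^{\phi/h}P(h)e^{-\phi/h}$ and establish its semiclassical subellipticity by a positive-commutator estimate against a radial Morawetz multiplier $A$ (the symmetrization of $\tfrac1{i}f(r)\partial_r$). Separating the real and imaginary parts of $\langle P_\phi v, A v\rangle$, the Laplacian commutator produces the usual nonnegative Morawetz terms (reflecting the nontrapping Euclidean radial geometry for convex $f$), while the variable coefficient produces a term carrying $\partial_r(c^{-2}) = -2c^{-3}\partial_r c$. This is precisely where the hypothesis enters: since $\partial_r c$ is a compactly supported measure bounded above by a radial measure, its contribution — equivalently, the jump terms created at the interfaces where $c$ is discontinuous — can be given a favorable sign and absorbed into the radial weight, so that H\"ormander's positive-commutator condition holds despite the low regularity. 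I expect this to be the main obstacle: one must regularize $c$, carry out the integration by parts for the smooth approximants, keep all interface (jump) terms under uniform control, and pass to the limit, using the radial upper bound on $\partial_r c$ to guarantee these terms never carry the wrong sign. This generalizes the lower-regularity argument of \cite{d} to the present piecewise-constant-type wavespeeds.

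For the improved estimate \eqref{e:linres} I would choose the compact set $K$ to contain $\supp\partial_r c$, so that $c$ is constant on the region carrying $\chi_1$; there $P(h)$ agrees with the free constant-coefficient operator, whose geometry is nontrapping and whose resolvent satisfies the limiting-absorption bound $\|\chi R_1(\lambda)\chi\| \le C/\re\lambda$. The point is that a cutoff of the resolvent away from the region of variation sees only this free, nontrapping dynamics and therefore inherits the sharp $O(1/\re\lambda)$ bound, even though the full resolvent is only $O(e^{C\re\lambda})$; this is the improvement already observed in \cite{bu,cv,rt}. Concretely one glues the free exterior model to the interior estimate \eqref{e:expres}, or equivalently runs the Morawetz argument above with $f$ strictly increasing and supported in the exterior region where $c$ is constant: the positive commutator then gains the full power of $\re\lambda$, while the only coupling to the interior is through boundary terms across $\partial K$ that are handled by the gluing/propagation mechanism of \cite{bu,cv,rt}. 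Multiplying through by $h^2$ then yields \eqref{e:linres}.
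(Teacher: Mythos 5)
Your strategy for \eqref{e:expres} --- reduce to the semiclassical regime $h = 1/\re\lambda$ and prove a weighted positive-commutator (Carleman) estimate with a bounded radial weight, the hypothesis on $\partial_r c$ entering through the term carrying $\partial_r(c^{-2})$ --- is the same family of argument as the paper's, which constructs radial weights $w = 1-(1+r)^{-\delta}$ and $e^{\varphi/h}$ and then invokes the machinery of \cite[\S 2]{d} verbatim, with \eqref{e:lweights} replacing \cite[(2.1)]{d}. But your proposal omits exactly the step that constitutes the paper's contribution: the construction of the weight when $\partial_r c$ is a \emph{measure}. The paper does not regularize $c$ and pass to the limit; it builds $\psi$ with $\psi'$ a measure satisfying \eqref{e:mweights} directly from the cumulative distribution function of the dominating radial measure $\mu$ (namely $\psi(r) = \mu((0,r)) + \max V$ for $r \le R$, matched to $B/w - E/2$ and then to $0$), and then obtains the exponential weight by solving the Riccati problem \eqref{e:ueq}, i.e.\ $\varphi'^2 - h\varphi'' = \psi$, via Carath\'eodory existence \cite{cl} and ODE comparison, yielding $0 \le \varphi' \le \sqrt{\psi(R)}$ with $\max\varphi'$ and $\supp\varphi'$ uniform in $h$. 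This design guarantees that the final constants depend only on the total mass of $\mu$ (and on $\max V$), never on a pointwise bound for $\partial_r c$. Your mollification plan must reproduce exactly this uniformity, and that is where it founders: for piecewise constant $c$ the approximants have $\|\partial_r c_\varepsilon\|_{L^\infty} \sim 1/\varepsilon$, so any commutator estimate whose constants see pointwise bounds on $\partial_r c_\varepsilon$ degenerates in the limit. You correctly flag this as ``the main obstacle,'' but flagging it is not resolving it; the missing idea is precisely the weight built from $\mu((0,r))$.

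Your mechanism for \eqref{e:linres} is a second, more serious gap: it would fail as described. One cannot obtain the $C/\re\lambda$ bound by ``gluing the free exterior model to the interior estimate \eqref{e:expres}'': any argument that couples the exterior region to the interior through the bound \eqref{e:expres}, or through unsigned boundary terms on $\partial K$, inherits the factor $e^{C\re\lambda}$. Propagation/gluing technology is structurally unavailable here, because such arguments carry $O(h^\infty)$ errors, which are negligible against polynomial bounds but not against $e^{C/h}$; gluing theorems in the literature accordingly assume polynomial model bounds. Likewise, a Morawetz multiplier $f(r)\partial_r$ with $f$ supported only in the exterior creates commutator terms of the wrong sign at the inner edge of $\supp f$, again controlled only by the exponentially large interior bound. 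The references you invoke \cite{bu, cv, rt}, and the present paper via \cite{d}, do not glue: both \eqref{e:expres} and \eqref{e:linres} follow from a single \emph{global} weighted energy inequality --- this is why \eqref{e:lweights} is required to hold on all of $[0,\infty)$ --- and the exterior improvement is then pure bookkeeping: $\varphi' = u$ vanishes identically for $r \ge R_0$, so cutoffs supported outside the ball $K$ of radius $R_0$ never meet the factor $e^{C/h}$. Note also that $K$ is this sufficiently large ball produced by the construction, with $R_0 > R$ in general, not merely a neighborhood of $\supp \partial_r c$ as you propose; the stronger claim with $K$ shrunk to $\supp\partial_r c$ is neither what the Proposition asserts nor what the cited results provide.
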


\begin{proof}
Let $h = 1/\re \lambda$. Then $R_c(\lambda) = h^2(-h^2\Delta - c^{-2})^{-1}c^{-2}$ and it is enough to show that
\begin{equation}\label{e:hest}
\|\chi_j(-h^2\Delta - c^{-2} - i \varepsilon)^{-1}\chi_j\|_{L^2(\mathbb R) \to L^2(\mathbb R)} \le 
\begin{cases} 
e^{C/h}, \qquad &j=0, \\ C/h, \qquad &j=1,
\end{cases}
\end{equation}
for all $h>0$ sufficiently small and for all $\varepsilon >0$. 

To simplify notation, in the remainder of the proof we identify radial functions on $\mathbb R^n$ with functions on $[0,\infty)$. Fix $E>0$ such that $V := E - c^{-2}$ is compactly supported. Arguing as in \cite[\S2]{d}, it suffices to construct $\varphi = \varphi_h \colon [0,\infty) \to [0,\infty)$ such that  $\varphi'$ is nonnegative with $\max \varphi'$ and $\supp \varphi'$ uniformly bounded in $h$, $\varphi'''$ is a measure, and such that
\begin{equation}\label{e:lweights}
-Ew'/2 \le  \partial_r(w(\varphi'^2 - h\varphi'' - V )), \qquad  w: = 1 - (1+r)^{-\delta},
\end{equation}
for some $\delta>0$ sufficiently small (and independent of $h$).
Indeed, once we have established \eqref{e:lweights}, we may follow \cite[\S2]{d} word by word, except that we replace \cite[(2.1)]{d} with \eqref{e:lweights}.

We will first construct $\psi = \psi_h(r) \colon [0,\infty) \to [0,\infty)$ with $\psi'$ a measure such that
\begin{equation}\label{e:mweights}
-Ew'/2 \le  \partial_r(w(\psi-V)).
\end{equation}
Fix $R>0$ such that $\supp V$ is contained in the open ball centered at zero of radius $R$. Let $\mu$ be a nonnegative, compactly supported, radial measure with $\partial_r V \le \mu$, and let 
\[
\psi = \psi_h(r) := \begin{cases}
\mu((0,r)) + \max V, \qquad &r \le R, \\
\frac B w - \frac E 2, \qquad &R<r\le R_0, \\
0, \qquad &r > R_0,
\end{cases}
\]
where $B:= w(R)(\psi(R) + E/2)$ and $R_0:=w^{-1}(2B/E)$ are taken so as to make $\psi$ continuous at $r=R, \ R_0$. Note that for the latter definition to make sense we must have $2B/E<1$ since $w$ takes values in $(0,1)$, but since $w(R) \to 0$ as $\delta \to 0^+$, we have $B \to 0$ then as well so it suffices to take $\delta>0$ sufficiently small. Then $0 \le \partial_r(w(\psi-V))$ for $r \in(0,R)\cup(R_0,\infty)$, and $-Ew'/2= (w\psi)'$ for $r \in (R,R_0)$, giving \eqref{e:mweights}.

It now remains to construct $\varphi$ as above with 
\[\varphi'^2 - h \varphi'' = \psi.\]
For this, we consider the solution to the initial value problem
\begin{equation}\label{e:ueq}
u' = (u^2-\psi)/h, \qquad u(R_0)=0.
\end{equation}
A solution exists and is absolutely continuous in a neighborhood of $R_0$ by Carath\'eodory's theorem (see e.g. \cite[Chapter 2, Theorem 1.1]{cl}), and it is unique because if $u_1$ and $u_2$ are two such solutions then the difference $\tilde u = u_1 - u_2$ solves $\tilde u' = (u_1+u_2)\tilde u$, $\tilde u(R_0) = 0$, and hence vanishes identically. 

Observe that since $\psi(r) = 0$ for all $r \ge R_0$, it follows that $u(r) = 0$ there. 
We will prove that  $0 \le u \le \sqrt{\psi(R)}$ wherever $u$ is defined. It then follows (see e.g. \cite[Chapter 2, Theorem 1.3]{cl}) that $u$ can be extended to $[0,\infty)$, where it obeys the same bounds, and we may put $\varphi':=u$.
It remains to show that  $0 \le u(r) \le \sqrt{\psi(R)}$ for $r<R_0$.

That  $u(r) \ge 0$ for $r < R_0$ follows from $u' \le u^2/h$. Indeed if there existed $r_0<R_0$ with $u(r_0)<0$ then nearby we would have $u'/u^2\le 1/h$ and hence  
\begin{equation}\label{e:upos}
u(r_0)^{-1} - u(r)^{-1}  \le (r-r_0)/h. 
\end{equation}
As $r$ increases from $r_0$ this must remain true until $u(r)$ vanishes, but as $r$ approaches the first point where $u(r)$ vanishes (and such a point must exist since $u(R_0) =0$), the left hand side of \eqref{e:upos} increases without bound, which is a contradiction.

That  $u \le \sqrt{\psi(R)}$ for $r < R_0$ follows from $u' \ge (u^2 - \psi(R))/h$ by a similar argument. Indeed, let  $v$ be the solution to
\[
v' = (v^2-\psi(R))/h, \qquad v(R_0) = 0,
\]
and observe that $v$ is defined on $\mathbb R$ and obeys $0 < v(r) < \sqrt{\psi(R)}$ for $r<R_0$. Suppose there existed $r_0<R_0$ with $u(r_0)>v(r_0)$. Let $z = u-v$, so that 
\begin{equation}\label{e:zeq}z' \ge (u^2-v^2)/h = z(u+v)/h.\end{equation}
Since $z(r_0)>0$ and $z(R_0)=0$ there must be a point $r' \in (r_0,R_0)$ such that $z(r')>0$ and $z'(r')<0$ by the mean value theorem, but this contradicts \eqref{e:zeq}, proving $u \le v < \sqrt{{\psi(R)}}$.
\end{proof}

In this paper we only use the bound \eqref{e:expres}. It would be interesting to see if the improvement \eqref{e:linres} can be used to get better estimates at high frequencies below, possibly improving part (2) of the Theorem.

\section{Forward Operator}\label{s:forward}

Beginning in this section we take dimension $n=3$. The results of this section generalize almost without changes to the case of arbitrary odd dimension $\ge 3$ and to wavespeeds $c$ which are any constant $c_0>0$ outside of a compact set -- only the notation is a little more complicated then. We expect even dimensions to also be manageable, once the behavior of the resolvent near $0$ is analyzed, e.g. in the manner of \cite{burq}.

Let $\Omega \Subset \mathbb R^3$ be a bounded domain with smooth boundary. For every $\varepsilon>0$, let $\Omega_\varepsilon$ be the set of points in $\Omega$ of distance greater than $\varepsilon$ to $\partial \Omega$, and $L^\infty_{\Omega,\varepsilon}$ be the set of functions $c \in L^\infty(\mathbb R^3)$ which are bounded below by a positive constant and which are identically 1 outside of $\Omega_\varepsilon$. For $c \in L^\infty_{\Omega,\varepsilon}$, let $R_c(\lambda)$ be the resolvent as defined in \eqref{e:resdef}. In the following Lemma we review some resolvent bounds which are essentially well-known.

\begin{lem}\label{l:res}
Let $\varepsilon>0$,  let $c \in L^\infty_{\Omega,\varepsilon}$, and fix $\chi_0 \in C_c^\infty(\mathbb R^3)$ which is identically $1$ near $\overline \Omega$.

\begin{enumerate}
\item The cutoff resolvent $\chi_0 R_c(\lambda)\chi_0 \colon L^2(\mathbb R^3) \to H^2(\mathbb R^3)$ extends continuously from $\{\lambda \in \mathbb C \mid \im \lambda>0\}$ to $\mathbb R$. 

\item For $\lambda_0\ge 1$, put
\[
a_c(\lambda_0) :=1+ \max_{\lambda \in [-\lambda_0,\lambda_0]} \|\chi_0 R_c(\lambda)\chi_0\|_{L^2(\mathbb R^3) \to H^2(\mathbb R^3)}.
\]
If $c' \in L^\infty_{\varepsilon,\Omega}$ obeys $\|c'^2-c^2\|_{L^\infty}\le 1/2a_c(\lambda_0)$, then
\[
a_{c'}(\lambda_0) \le (1 + C \|c'^2-c^2\|_{L^\infty})a_c(\lambda_0).
\]
\item For every $\chi \in C_c^\infty(\mathbb R^3)$ with $\supp \chi \cap \supp(1-c) = \varnothing$ and $\chi \chi_0 = \chi$,  $\chi_0R_c(\lambda)\chi$ extends to a bounded family of operators $H^{s}(\mathbb R^3) \to H^{s+2}(\mathbb R^3)$ for every $s \in [-2,0]$ and $\lambda \in \mathbb R$, and
\[
\max_{\lambda \in [-\lambda_0,\lambda_0]} \|\chi_0R_c(\lambda)\chi\|_{H^{s}(\mathbb R^3) \to H^{s+2}(\mathbb R^3)}\le C \lambda_0^2 a_c(\lambda_0).
\]

\item For every $\chi_1 \in C_c^\infty(\mathbb R^3)$ with $\supp \chi_1 \cap \supp \chi = \varnothing$, $\chi R_c(\lambda)\chi_1$ extends to a bounded family of operators $L^2(\mathbb R^3) \to H^{N}(\mathbb R^3)$ for every $N \in 2\mathbb N$, and 
\[
\max_{\lambda \in [-\lambda_0,\lambda_0]} \|\chi R_c(\lambda)\chi_1\|_{L^2(\mathbb R^3) \to H^{N}(\mathbb R^3)}\le C \lambda_0^{N-2} a_c(\lambda_0).
\]

\end{enumerate}
\end{lem}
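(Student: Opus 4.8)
\emph{Overall strategy.} The plan is to treat the four parts in order, using the free resolvent $R_0(\lambda)=(-\Delta-\lambda^2)^{-1}$ on $\mathbb{R}^3$ as the reference object and the second resolvent identity to transfer its properties to $R_c$. For Part (1), since $c\equiv 1$ outside the compact set $\overline{\Omega_\varepsilon}$, I set $V:=c^{-2}-1$ (compactly supported) and write $-c^2\Delta-\lambda^2=c^2(-\Delta-\lambda^2-\lambda^2 V)$, which gives $R_c(\lambda)=(I-\lambda^2 R_0(\lambda)V)^{-1}R_0(\lambda)c^{-2}$; for $\im\lambda>0$ this is the genuine resolvent. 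After composing with the compactly supported $V$, the operator $\lambda^2 R_0(\lambda)V$ is compact on $L^2$ (Rellich) and, in dimension $3$, holomorphic in $\lambda$ across $\mathbb{R}$, including through $\lambda=0$, because the kernel $e^{\ii\lambda|\tilde x-\tilde y|}/(4\pi|\tilde x-\tilde y|)$ of the cutoff free resolvent is entire in $\lambda$. Analytic Fredholm theory then produces a meromorphic continuation of $\chi_0 R_c(\lambda)\chi_0$ to a neighborhood of $\mathbb{R}$, and it remains to exclude real poles. Since $-c^2\Delta$ is self-adjoint and nonnegative, a pole at $\lambda_*\neq 0$ would give an outgoing solution of $(-c^2\Delta-\lambda_*^2)u=0$; Rellich's theorem forces $u$ to vanish near infinity, and weak unique continuation for $-\Delta u=\lambda_*^2 c^{-2}u$, valid because $\lambda_*^2 c^{-2}\in L^\infty$, forces $u\equiv 0$. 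At $\lambda=0$ the prefactor $\lambda^2$ annihilates the perturbation, so $I-\lambda^2 R_0(\lambda)V=I$ is invertible. Thus there are no real poles and the cutoff resolvent extends continuously to $\lambda\in\mathbb{R}$. This unique continuation input for the merely bounded coefficient $c$ is the one genuinely analytic ingredient, and I regard it as the main (though classical) obstacle; the remaining parts are bookkeeping, but the cutoff combinatorics and the powers of $\lambda_0$ require care.

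\emph{Part (2).} Writing $W:=c'^2-c^2$, which is supported where $\chi_0\equiv 1$, the factorization $-c'^2\Delta-\lambda^2=(I+W(-\Delta)R_c(\lambda))(-c^2\Delta-\lambda^2)$ gives $R_{c'}(\lambda)=R_c(\lambda)(I+W(-\Delta)R_c(\lambda))^{-1}$, extended to real $\lambda$ by Part (1). The crucial point is to regard $W(-\Delta)$ as a bounded map $H^2\to L^2$ of norm $\le \|W\|_{L^\infty}$, rather than converting $\Delta R_c$ into $\lambda^2 R_c$; then $W(-\Delta)\cdot\chi_0 R_c(\lambda)\chi_0$ is bounded on $L^2$ with norm $\le C\|W\|_{L^\infty}a_c(\lambda_0)$, with no spurious power of $\lambda_0$. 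Because $W=\chi_0 W$ is compactly supported, in the Neumann series for $(I+W(-\Delta)R_c)^{-1}$ each factor of $R_c$ is flanked on both sides by $\chi_0$, so the entire series is controlled by $\chi_0 R_c\chi_0$. The smallness hypothesis $\|W\|_{L^\infty}\le 1/2a_c(\lambda_0)$ makes the series converge and yields $\|\chi_0 R_{c'}\chi_0\|_{L^2\to H^2}\le a_c/(1-\|W\|_{L^\infty}a_c)$, hence the comparability $a_{c'}(\lambda_0)\le (1+C\|W\|_{L^\infty})a_c(\lambda_0)$, where $C$ absorbs a factor $a_c(\lambda_0)$.

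\emph{Part (3).} For $s=0$ the bound is immediate: since $\chi=\chi_0\chi$, we have $\chi_0 R_c\chi=(\chi_0 R_c\chi_0)\chi$, bounded $L^2\to H^2$ with norm $\le C a_c(\lambda_0)$. For $s\in[-2,0)$ I would pass to the transpose, $(\chi_0 R_c(\lambda)\chi)^t=\chi R_c(\lambda)^t\chi_0$, where $R_c(\lambda)^t=(-\Delta c^2-\lambda^2)^{-1}$ is the resolvent of an operator of the same type and so obeys the same cutoff bounds (with the roles of $\chi$ and $\chi_0$ interchanged, using that $\chi$ is smooth and preserves each $H^m$). This controls $H^{-s-2}\to H^{-s}$, and since $s\mapsto -s-2$ preserves $[-2,0]$, interpolating against the $s=0$ endpoint gives boundedness for all $s\in[-2,0]$. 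The stated factor $\lambda_0^2$ is not sharp for this argument; it arises, and suffices, if one instead measures $H^{s+2}$ directly through the elliptic estimate $\|v\|_{H^{s+2}}\lesssim \|(-\Delta-\lambda^2)v\|_{H^s}+(1+\lambda_0^2)\|v\|_{H^s}$ on the region where $c\equiv 1$.

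\emph{Part (4).} Here $\supp\chi\cap\supp\chi_1=\varnothing$ and $c\equiv 1$ near $\supp\chi$, so $u:=R_c(\lambda)\chi_1 f$ solves the free Helmholtz equation $(-\Delta-\lambda^2)u=0$ on a neighborhood of $\supp\chi$, the identity holding for the boundary values by Part (1). On that neighborhood $\Delta u=-\lambda^2 u$, so interior elliptic regularity lets me trade each additional pair of derivatives for one factor $\lambda^2$: with nested cutoffs $\chi\prec\chi'\prec\cdots\prec\chi_0$ supported where $c\equiv 1$, a standard bootstrap gives $\|\chi u\|_{H^N}\le C\lambda_0^{N-2}\|\chi_0 u\|_{H^2}$, the lower-order commutator terms being absorbed at each stage. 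Since $\|\chi_0 u\|_{H^2}=\|\chi_0 R_c(\lambda)\chi_1 f\|_{H^2}\le C a_c(\lambda_0)\|f\|_{L^2}$ (using $\chi_0\equiv 1$ on $\supp\chi_1$), this produces the desired bound $C\lambda_0^{N-2}a_c(\lambda_0)$. The only care needed is the commutator bookkeeping in the bootstrap and the verification that the free equation and all the identities above persist for the limiting (real $\lambda$) values furnished by Part (1).
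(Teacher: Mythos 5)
Your proposal is correct in substance, and it splits naturally into parts that mirror the paper and one part that genuinely departs from it. For part (1) the paper simply invokes black-box scattering theory (Sj\"ostrand--Zworski) for the meromorphic continuation and then excludes real poles exactly as you do: a real pole produces an outgoing resonant state, which for $\lambda'\ne 0$ is compactly supported (a Rellich-type theorem, valid in odd dimensions) and hence vanishes by Aronszajn's unique continuation theorem for $L^\infty$ potentials. Your hands-on analytic-Fredholm construction with $V=c^{-2}-1$ is the standard proof underlying those cited facts, so this is the same mathematics made self-contained -- with the caveat that for real $\lambda$ the compact operator must be written with cutoffs (e.g.\ $\lambda^2 VR_0(\lambda)\rho$ with $\rho=1$ on $\supp V$), since $\lambda^2 R_0(\lambda)V$ does not map $L^2(\mathbb R^3)$ to itself on the real axis; your mention of the ``cutoff free resolvent'' suggests you know this, but it should be said precisely. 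Part (2) is essentially identical to the paper: the same resolvent identity, the same Neumann series with $\Delta$ kept as an $H^2\to L^2$ map so that no power of $\lambda_0$ appears, the same observation that every resolvent factor is flanked by $\chi_0$, and the same absorption of a factor $a_c(\lambda_0)$ into the constant $C$. Part (4) also matches the paper, which proves $N=2$ and inducts using $\Delta\chi R_c(\lambda)\chi_1=-\lambda^2\chi R_c(\lambda)\chi_1+[\Delta,\chi]R_c(\lambda)\chi_1$; your bootstrap with nested cutoffs is the same device.

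Part (3) is where you differ. The paper uses the identity $R_c(\lambda)\chi=R_1(\lambda)\chi+\lambda^2R_c(\lambda)(1-c^2)R_1(\lambda)\chi$ (with $R_1$ the free resolvent) and reads off $H^s\to H^{s+2}$ boundedness for $s\in[-2,0]$ by composing $\chi_0R_1(\lambda)\chi\colon H^s\to H^{s+2}$, multiplication by $(1-c^2)\in L^\infty$ (this is where $s\ge -2$ enters), and $\chi_0R_c(\lambda)\chi_0\colon L^2\to H^2$ (this is where $s\le 0$ enters); the factor $\lambda_0^2$ is exactly the $\lambda^2$ in the identity. Your duality-plus-interpolation route is viable and in fact yields the sharper bound $Ca_c(\lambda_0)$ with no factor of $\lambda_0^2$, but your justification of the transpose bound is too quick: $R_c(\lambda)^t=(-\Delta c^2-\lambda^2)^{-1}$ is \emph{not} ``of the same type,'' and it does not map into $H^2_{\textrm{loc}}$ at all, because $R_c(\lambda)^t=c^{-2}R_c(\lambda)c^2$ and multiplication by the merely bounded $c^{-2}$ destroys $H^2$ regularity. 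What rescues the step is not the smoothness of $\chi$ but the hypothesis $\supp\chi\cap\supp(1-c)=\varnothing$: then $\chi c^{-2}=\chi$, so $\chi R_c(\lambda)^t\chi_0=\chi\,(\chi_0R_c(\lambda)\chi_0)\,(\text{multiplication by }c^2)$ is bounded $L^2\to H^2$ by $Ca_c(\lambda_0)$, which gives the $s=-2$ endpoint by duality; interpolating with your $s=0$ endpoint then finishes. With that one correction your part (3) is a valid -- and quantitatively stronger -- alternative to the paper's argument.
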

\begin{proof}
To prove (1), we observe that $-c^2\Delta$ is a black box operator in the sense of Sj\"ostrand and Zworski \cite{szcomplex}, so $R_c(\lambda)\colon L^2_{\textrm{comp}}(\mathbb R^3) \to L^2_{\textrm{loc}}(\mathbb R^3)$
continues meromorphically to $\lambda \in\mathbb C$ 
(\cite[Theorem 2.2]{sres} or \cite[Theorem 4.4]{dz}).  We may replace $L^2_\textrm{loc}(\mathbb R^3)$ by $H^2_\textrm{loc}(\mathbb R^3)$  thanks to the identity $\Delta R_c(\lambda) = -c^{-2}(\lambda^2 R_c(\lambda) + I)$, so to prove (1) it remains to show that there are no poles in  $ \mathbb R$. Indeed, suppose by way of contradiction $\lambda' \in \mathbb R$ is such a pole. Then, by \cite[\S 2.4]{sres} or \cite[\S4.2]{dz} there is a corresponding outgoing resonant state, that is an outgoing solution $u_0$ to $(-\Delta - c^{-2}\lambda'^2)u_0 = 0$ which is not identically zero.
If $\lambda' = 0$, then $u_0$ is a bounded harmonic function and must vanish. If, $\lambda' \ne 0$, then by \cite[Theorem 2.4]{sres} or \cite[Theorem 3.32]{dz} $u_0$ is compactly supported and hence must vanish by Aronszajn's unique continuation theorem \cite{a}. 

To prove (2) we observe that, multiplying  by $\chi_0$ on the right and solving for $R_{c'}(\lambda)\chi_0$ in the resolvent identity
 \begin{equation}\label{e:resid}
R_{c'}(\lambda) - R_c(\lambda) = R_{c'}(\lambda)(c'^2-c^2)\Delta R_c(\lambda)
\end{equation}
gives, using the fact that $\chi_0 = 1$ near $\supp(c'-c)$,
\begin{equation}\label{e:neu}
R_{c'}(\lambda)\chi_0  =  R_c(\lambda)\chi_0\sum_{k=0}^\infty ((c'^2-c^2)\Delta \chi_0 R_c(\lambda)\chi_0)^k,
\end{equation}
where the sum is a Neumann series in the sense of operators $L^2(\mathbb R^3) \to L^2(\mathbb R^3)$. Hence
\[
 \|\chi_0 R_{c'}(\lambda)\chi_0\|_{L^2(\mathbb R^3) \to H^2(\mathbb R^3)} \le (1 + C\|c'^2-c^2\|_{L^\infty})  \|\chi_0 R_{c}(\lambda)\chi_0\|_{L^2(\mathbb R^3) \to H^2(\mathbb R^3)},
\]
and (2) follows.

To prove (3), we use the resolvent identity 
\[
R_c(\lambda) = R_1(\lambda) + R_c(\lambda)(c^2-1)\Delta R_1(\lambda) = R_1(\lambda) + R_c(\lambda)(1-c^2) + \lambda^2R_c(\lambda)(1-c^2)R_1(\lambda),
\]
where $R_1(\lambda) = (-\Delta - \lambda^2)^{-1}$. Since $(1-c)\chi = 0$, this implies
\begin{equation}\label{e:rchi}
R_c(\lambda) \chi =  R_1(\lambda) \chi + \lambda^2 R_c(\lambda)(1-c^2)R_1(\lambda)\chi.
\end{equation}
Then (3) follows from the fact that $\chi_0 R_1(\lambda) \chi_0$ is a continuous family of operators $H^{s}(\mathbb R^3) \to H^{s+2}(\mathbb R^3)$ for every $\chi_0$.

Finally, (4) has already been established for $N = 2$. It  follows for larger $N$ by induction, since
\[
\Delta \chi R_c(\lambda) \chi_1 = - \lambda^2  \chi R_c(\lambda) \chi_1  + [\Delta,\chi]R_c(\lambda)\chi_1.
\]
\end{proof}

 Denote a Fourier transform in time by
\[
\hat f (\lambda) = \int e^{-it\lambda}f(t)dt.
\]
For $\lambda_0\ge1$, let  $L_{\lambda_0}$ be the set of functions $f = f(t,x)  \in  L^2(\mathbb R; H^{-1/2}(\partial \Omega))$,  such that $\hat f(\lambda,x) \equiv 0$  when $|\lambda| \ge \lambda_0$. Let $\tau_{\partial \Omega}$ denote the trace map to $\partial \Omega$, i.e. the map which restricts a function on $\mathbb R^3$ to $\partial \Omega$. Recall that $\tau_{\partial \Omega}$ is bounded from $H^{s+1/2}(\mathbb R^3)$ to $H^{s}(\partial \Omega)$ for $s > 0$, and hence by duality $f \in L^2(\mathbb R; H^{-1/2}(\partial \Omega)) \Rightarrow f dS_{\partial \Omega} \in L^2(\mathbb R; H^{-1}(\mathbb R^3))$.

To simplify formulas below we write $f$ for $f dS_{\partial \Omega}$ below.

In the next Proposition, we apply the resolvent bounds of Lemma \ref{l:res} to study solutions to the wave equation \eqref{e:wave}.

\begin{prop}\label{p:res}
 Fix  $ \varepsilon>0$. 
 \begin{enumerate}
\item For every $c \in L^\infty_{\Omega,\varepsilon}$, the formula
 \begin{equation}\label{e:udef}
 U_cf(t) := \frac 1 {2\pi} \int e^{-it\lambda}R_c(\lambda)\hat f(\lambda)dS_{\partial \Omega}d\lambda,
 \end{equation}
 defines  a bounded linear operator $U_c \colon L_{\lambda_0} \to H^m(\mathbb R; H^{1}_{\textrm{loc}}(\mathbb R^3))$ for every $\lambda_0>0$, $m \in \mathbb N$. For every $f \in L_{\lambda_0}$, and we have
\[
(\partial_t^2  -c^2\Delta)U_cf = f .
\]
There is a constant $C$, depending on $m$, such that
\begin{equation}\label{e:restr}
\|\tau_{\partial \Omega} U_cf\|_{H^m(\mathbb R; H^{1/2}(\partial \Omega))} \le  C \lambda_0^{m+2} a_c(\lambda_0) \|f\|_{L^2(\mathbb R; H^{-1/2}(\partial \Omega))},
\end{equation}
for all $f \in L_{\lambda_0}$. 

\item If $c, \ c' \in L^\infty_{\Omega,\varepsilon}$, then for every $m \in \mathbb N, \ N \in 2 \mathbb N$, there is a constant $C$  such that, for every $\lambda_0>0$ and $f \in L_{\lambda_0}$ we have
\begin{equation}\label{e:cc'}
\|\tau_{\partial \Omega} (U_{c'} -  U_{c})f\|_{H^m(\mathbb R; H^{N-1/2}(\partial \Omega))}\le C \lambda_0^{m+N+2}a_{c}(\lambda_0)a_{c'}(\lambda_0)\|f\|_{L^2(\mathbb R; H^{-1/2}(\partial \Omega))}.
\end{equation}

\item If $c \in  L^\infty_{\Omega,\varepsilon}$, then for every $m \in \mathbb N, \ N \in 2\mathbb N$, there is  a constant $C$ such that, for every $\lambda_0 >0$ and $f \in L_{\lambda_0}$  and $c' \in L^\infty_{\Omega,\varepsilon}$ such that $\|c'^2-c^2\|_{L^\infty} \le 1/2{a_c(\lambda_0)}$ we have
\begin{equation}\label{e:frechet}
U_{c'}f(t) - U_{c}f(t) = G_{c,c'}f(t) + E_{c,c'}f(t),
\end{equation}
where
\[
G_{c,c'}f(t):= \frac 1 {2\pi} \int e^{-it \lambda}R_c(\lambda)(1 - c'^2c^{-2})\lambda^2 R_c(\lambda)\hat f(\lambda) d\lambda,
\]
and
\begin{equation}\label{e:frechest}\begin{split}
\|\tau_{\partial \Omega} G_{c,c'}f\|_{H^m(\mathbb R; H^{N-1/2}(\partial \Omega))} &\le C \lambda_0^{m+N+2}a_{c}(\lambda_0)^2 \|c^2-c'^2\|_{L^\infty(\mathbb R^3)}\|f\|_{L^2(\mathbb R; H^{-1/2}(\partial \Omega))},\\
\|\tau_{\partial \Omega} E_{c,c'}f\|_{H^m(\mathbb R; H^{N-1/2}(\partial \Omega))} &\le C \lambda_0^{m+N+2}a_{c}(\lambda_0)^3 \|c^2-c'^2\|^2_{L^\infty(\mathbb R^3)}\|f\|_{L^2(\mathbb R; H^{-1/2}(\partial \Omega))}.
\end{split}\end{equation}
So in particular 
\begin{equation}\label{e:hs}
\|\tau_{\partial \Omega} (U_{c'} -  U_{c})f\|_{H^m(\mathbb R; H^{N-1/2}(\partial \Omega))}\le  C \lambda_0^{m+N+2}a_{c}(\lambda_0)^2 \|c^2-c'^2\|_{L^\infty(\mathbb R^3)} \|f\|_{L^2(\mathbb R; H^{-1/2}(\partial \Omega))}.
\end{equation}
\end{enumerate}
\end{prop}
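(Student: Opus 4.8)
The plan is to work throughout on the Fourier side in time, where by \eqref{e:udef} the operator $U_c$ is multiplication by the resolvent, $\widehat{U_cf}(\lambda) = R_c(\lambda)\hat f(\lambda)\,dS_{\partial\Omega}$. Since every $f\in L_{\lambda_0}$ has $\hat f$ supported in $[-\lambda_0,\lambda_0]$, Plancherel turns the $H^m$-in-time norm into $\int_{-\lambda_0}^{\lambda_0}(1+\lambda^2)^m\,\|\cdot\|^2\,d\lambda$, and on this window $(1+\lambda^2)^m\le C\lambda_0^{2m}$; this is the mechanism converting time-regularity into powers of $\lambda_0$. For Part (1), the mapping $U_c\colon L_{\lambda_0}\to H^m(\mathbb R;H^1_{\mathrm{loc}})$ then follows by integrating Lemma \ref{l:res}(1) over the compact frequency window, while $(\partial_t^2-c^2\Delta)U_cf=f$ is immediate from $(-c^2\Delta-\lambda^2)R_c(\lambda)=I$ after inverting the Fourier transform. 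For the trace estimate \eqref{e:restr} I would exploit that $c\equiv1$ near $\partial\Omega$ and compare $R_c(\lambda)$ to the free resolvent $R_1(\lambda)$: the free part $R_1(\lambda)(\hat f\,dS_{\partial\Omega})$ is the classical single-layer potential, whose trace is bounded $H^{-1/2}(\partial\Omega)\to H^{1/2}(\partial\Omega)$, while the correction $R_c(\lambda)(c^2-1)\Delta R_1(\lambda)(\hat f\,dS_{\partial\Omega})$ has effective source supported in $\Omega_\varepsilon$ and is controlled by $a_c(\lambda_0)$ through Lemma \ref{l:res}, with the $\lambda^2$ from $\Delta R_1$ supplying the factor $\lambda_0^2$.

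For Part (2) I would begin from the resolvent identity \eqref{e:resid}, so that $\widehat{(U_{c'}-U_c)f}(\lambda)=R_{c'}(\lambda)(c'^2-c^2)\Delta R_c(\lambda)\hat f(\lambda)\,dS_{\partial\Omega}$. The essential observation is that $(c'^2-c^2)$ is supported in $\Omega_\varepsilon$, hence at positive distance from $\partial\Omega$: the effective source $(c'^2-c^2)\Delta R_c(\lambda)\hat f$ lives away from the boundary, so by the off-diagonal smoothing of Lemma \ref{l:res}(4), applied with cutoffs separating $\Omega_\varepsilon$ from $\partial\Omega$, the difference acquires arbitrary regularity $H^N$ near $\partial\Omega$ at the cost of a fixed power of $\lambda_0$. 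Combining this gain with the $\lambda^2$ from $\Delta R_c$ and the two resolvent factors produces the bound \eqref{e:cc'} with the product $a_c(\lambda_0)a_{c'}(\lambda_0)$ and the stated power $\lambda_0^{m+N+2}$.

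For Part (3) I would iterate \eqref{e:resid} once, replacing $R_{c'}$ by $R_c+(R_{c'}-R_c)$ in the outer factor, which yields $R_{c'}-R_c=R_c(c'^2-c^2)\Delta R_c+R_{c'}(c'^2-c^2)\Delta R_c(c'^2-c^2)\Delta R_c$: the first term is linear and the second is the quadratic remainder $E_{c,c'}$. To identify the linear term with $G_{c,c'}$ I would use $\Delta R_c=-c^{-2}(I+\lambda^2R_c)$ to rewrite $R_c(c'^2-c^2)\Delta R_c=R_c(1-c'^2c^{-2})+R_c(1-c'^2c^{-2})\lambda^2R_c$. The crucial point is that the zeroth-order summand annihilates the source, since $(1-c'^2c^{-2})\hat f\,dS_{\partial\Omega}=0$ because $(1-c'^2c^{-2})$ vanishes on the neighborhood of $\partial\Omega$ where $c\equiv c'\equiv1$ while $\hat f\,dS_{\partial\Omega}$ is carried by $\partial\Omega$; this leaves exactly the stated $G_{c,c'}$. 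Estimating $G_{c,c'}$ as in Part (2) gives the linear bound with $a_c(\lambda_0)^2$ and one factor $\|c^2-c'^2\|_{L^\infty}$, while $E_{c,c'}$ carries two such factors and the product $a_c(\lambda_0)^2a_{c'}(\lambda_0)$; invoking the smallness hypothesis $\|c'^2-c^2\|_{L^\infty}\le1/2a_c(\lambda_0)$ together with Lemma \ref{l:res}(2) to replace $a_{c'}(\lambda_0)$ by $Ca_c(\lambda_0)$ upgrades this to $a_c(\lambda_0)^3$, and \eqref{e:hs} follows by the triangle inequality.

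I expect the main obstacle to be the careful bookkeeping of Sobolev exponents and $\lambda_0$-powers in the presence of the surface-supported source — in particular establishing \eqref{e:restr} for $\hat f\,dS_{\partial\Omega}\in H^{-1}(\mathbb R^3)$ — rather than any single conceptual difficulty. The two structural observations that make Parts (2) and (3) clean are that the perturbation is supported in the interior $\Omega_\varepsilon$, which is what permits boundary smoothing to arbitrary order $H^N$, and that the resulting zeroth-order term in the Fréchet derivative vanishes on the support of the source, which is what gives $G_{c,c'}$ its closed form.
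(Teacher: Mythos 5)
Your proposal is correct and follows essentially the same route as the paper's proof: Fourier analysis in time over the compact window $[-\lambda_0,\lambda_0]$, the resolvent identity \eqref{e:resid} combined with $\Delta R_c(\lambda) = -c^{-2}(I+\lambda^2 R_c(\lambda))$, the support separation between $\supp(c'^2-c^2)\subset\Omega_\varepsilon$ and the surface source on $\partial\Omega$ (which kills the zeroth-order term and supplies the $\lambda^2$), and the smoothing bounds of Lemma \ref{l:res}(3),(4) with cutoffs separating $\Omega_\varepsilon$ from $\partial\Omega$. The only difference is organizational: in Part (3) the paper defines $E_{c,c'}$ as the $k\ge 1$ tail of the Neumann series \eqref{e:neu}, expressed entirely in terms of $R_c$, whereas you keep a factor of $R_{c'}$ in the quadratic remainder after one iteration of \eqref{e:resid} and then invoke Lemma \ref{l:res}(2) to replace $a_{c'}(\lambda_0)$ by $Ca_c(\lambda_0)$ -- an equivalent bookkeeping, since Lemma \ref{l:res}(2) is itself proved from that same Neumann series.
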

\begin{proof}
Fix $\chi_0, \ \chi, \ \chi_1 \in C_c^\infty(\mathbb R^3)$, such that $\chi$ is $1$ near $\partial \Omega$, $\supp \chi \cap \overline{\Omega_\varepsilon} = \varnothing$, and $\chi_0 = 1$ near $\overline \Omega \cup \supp \chi$, $\chi_1$ is $1$ near $\overline{\Omega_\varepsilon}$ and $\supp \chi_1 \cap \supp \chi = \varnothing$,
\begin{enumerate}
\item Since $f  \in H^{-1}(\mathbb R^3)$, and $\chi f = f $, for any $t \in \mathbb R$, \eqref{e:udef} defines a function $U_cf(t) \in H^{1}_{\textrm{loc}}(\mathbb R^3)$  by Lemma \ref{l:res} (3). Moreover for any $m \ge 0$ there is a constant $C$ such that
\[
\|  \chi_0 U_cf\|_{H^m(\mathbb R; H^{1}(\mathbb R^3))} \le C \lambda_0^{m+2} a_c(\lambda_0) \|f\|_{L^2(\mathbb R; H^{-1/2}(\partial \Omega))},
\] 
for all $f \in L_{\lambda_0}.$ Applying $\tau_{\partial \Omega}$ gives \eqref{e:restr}.

\item Multiplying the resolvent identity \eqref{e:resid}
on the right by $\chi$, and arguing as we did to obtain \eqref{e:rchi}, we have
\begin{equation}\label{e:resid2}
(R_{c'}(\lambda) - R_c(\lambda))\chi = R_{c'}(\lambda)(1 - c'^2c^{-2})\lambda^2 R_c(\lambda)\chi,
\end{equation}
and hence
\[
 (U_{c'} - U_c)f(t) = \frac 1 {2\pi} \int e^{-i(t-s)\lambda}  R_{c'}(\lambda)(1 - c'^2c^{-2})\lambda^2 R_c(\lambda)\hat f(\lambda)d\lambda,
\]
and in particular
\[
 \tau_{\partial \Omega}(U_{c'} - U_c)f(t) = \tau_{\partial \Omega} \frac 1 {2\pi} \int e^{-it\lambda}  \chi  R_{c'}(\lambda) \chi_1 (1 - c'^2c^{-2})\lambda^2 \chi_0R_c(\lambda)\chi\hat f(\lambda)d\lambda,
\]
Then Lemma \ref{l:res} (3) and (4) give \eqref{e:cc'}.

\item Starting with \eqref{e:neu} and arguing as we did to obtain \eqref{e:rchi}, we have
\[\begin{split}
(R_{c'}(\lambda) - R_c(\lambda))\chi &= R_c(\lambda)\sum_{k=1}^\infty ((c'^2-c^2)\Delta R_c(\lambda))^k  \chi \\
&= R_c(\lambda)\left(\sum_{k=0}^\infty ((c'^2-c^2)\Delta R_c(\lambda))^k \right)(1 - c'^2c^{-2})\lambda^2 R_c(\lambda)\chi,
\end{split}\]
and in particular
\[
(R_{c'}(\lambda) - R_c(\lambda)) f  = R_c(\lambda) \left(\sum_{k=0}^\infty ((c'^2-c^2)\Delta R_c(\lambda))^k \right)(1 - c'^2c^{-2})\lambda^2 R_c(\lambda) \hat f(\lambda) .
\]
This implies that in the decomposition \eqref{e:frechet} we have 
\[
E_{c,c'}f(t) := \frac 1 {2\pi} \int e^{-it\lambda}R_c(\lambda)\left(\sum_{k=1}^\infty ((c'^2-c^2)\Delta R_c(\lambda))^k \right)(1 - c'^2c^{-2})\lambda^2 R_c(\lambda) \hat f(\lambda)d\lambda.
\]
On the other hand
\[
\tau_{\partial \Omega}G_{c,c'}f(t)= \tau_{\partial \Omega} \frac 1 {2\pi} \int e^{-it\lambda}\chi R_c(\lambda)\chi_1 (1 - c'^2c^{-2})\lambda^2 \chi_0 R_c(\lambda) \chi \hat f(\lambda)d\lambda.
\]
Then Lemma \ref{l:res} (3) and (4) gives the first of \eqref{e:frechest}. The proof of the second of \eqref{e:frechest} (which, together with the first of \eqref{e:frechest}, implies \eqref{e:hs}) is very similar. 
\end{enumerate}
\end{proof}

\section{Landweber Iteration}\label{s:land}

Let $X$ be a Hilbert space\footnote{Following the methods of \cite{dqs}, one could also allow $X$ to be a Banach space satisfying the convexity and smoothness conditions of that paper.} and let $c\colon X \to L^\infty(\mathbb R^3)$ be Fr\'echet differentiable with locally Lipschitz derivative, and weakly sequentially continuous (in the sense that it sends weakly convergent sequences to weakly convergent sequences). Fix an open set $X_0 \subset X$  such that $c(X_0) \subset L^\infty_{\Omega,\varepsilon}$. Note that in the Theorem $c$ is the inclusion map from a finite dimensional subspace of $L^\infty(\mathbb R^3)$; in the statement there we identify $X_0$ and $c(X_0)$, and write simply $c \in X_0$.

 Fix $\lambda_0>0$, $r \ge 0$, and $w \in L^1(\mathbb R)$ which is nonnegative,  and continuous at $0$ with $w(0)=1$. For each $T>0$ and $t_0 \in \mathbb R$, let 
\[
w_T(t) = w( (t-t_0)/T),
\]
 and let $Y = L^2((\mathbb R, w_T dt ; HS(L_{\lambda_0} \to  H^r(\partial \Omega)))$, that is the Hilbert space of functions in the $w_T$-weighted $L^2$ space on $\mathbb R$ with values in the space of Hilbert--Schmidt operators from $L_{\lambda_0}$ to $H^r(\partial \Omega)$. In the Theorem, $w$ is the characteristic function of $[-1,1]$, $T = \pi/2\lambda_0$, and $r=1/2$.
 
  Define $F \colon X_0 \to Y$ by
\begin{equation}\label{e:fdef}
F(x)f(t) := \tau_{\partial \Omega} (U_{c(x)} - U_1)f(t), \qquad x \in X_0,\  f \in L_{\lambda_0}, \ t \in \mathbb R.
\end{equation}
To see that $F(x) \in Y$, observe that \eqref{e:cc'} implies that $f \mapsto F(x)f(t)$ is bounded from $L_{\lambda_0}$ to $H^M(\partial \Omega)$ for every $M \in \mathbb N$, and hence it is a Hilbert--Schmidt operator from $L_{\lambda_0}$ to $H^r(\partial \Omega)$ since the inclusion operator $H^M(\partial \Omega) \hookrightarrow H^r(\partial \Omega)$ is Hilbert--Schmidt for $M-r> 3/2= \dim\partial \Omega/2$ (see e.g. the proof of \cite[Proposition B.20]{dz}). More precisely, fix $N \in 2 \mathbb N$ with $N>r+2$ and put
\[
F_t(x) f = F(x)f(t),
\]
so that
\begin{equation}\label{e:fths}\begin{split}
\|F(x)\|^2_Y &=  \int \|F_t(x)\|^2_{HS( L_{\lambda_0} \to H^r(\partial \Omega))} w_T dt \le C  \int \|F_t(x)\|_{L_{\lambda_0} \to H^{N-1/2}(\partial \Omega)}^2 w_T dt 
\\ & \le C T \sup  \{\|\tau_{\partial \Omega} (U_{c(x)} - U_1)f\|^2_{H ^1(\mathbb R; H^{N-1/2}(\partial \Omega))} \mid f \in L_{\lambda_0}, \|f\|_{L^2(\mathbb R; H^{-1/2}(\partial \Omega))} = 1\}
\\  &\le C T \lambda_0^{2N+6}a_{c(x)}(\lambda_0)^4,
\end{split}\end{equation}
where in the second inequality we used  $\|\cdot\|_{L^\infty(\mathbb R)} \le \|\cdot\|_{H^1(\mathbb R)}$, and in the third  we used \eqref{e:cc'}.

\subsection{Fr\'{e}chet derivative}

By Proposition \ref{p:res}(3), the Fr\'echet derivative of $F$ is given by
\[
DF(x)(\tilde x- x)f =  \frac 1 \pi \tau_{\partial \Omega}  \int e^{-it \lambda}R_{c(x)}(\lambda)[Dc(x)(\tilde x- x)]c(x)^{-1}\lambda^2 R_{c(x)}(\lambda)\hat f(\lambda) d\lambda,
\]
where $Dc$ is the Fr\'echet derivative of $c$.

\begin{lem}\label{l:land} Fix $x_0 \in X_0$. There are constants $C_{\hat L}$ and $C_L$ and a small closed ball  $\mathcal B \subset X_0$ with $x_0 \in \mathcal B$, such that the following hold for all $x, \ \tilde x \in \mathcal B$, and $\lambda_0>0$:
\begin{equation}\label{e:dfbound}
\|DF(x)\|_{X \to Y} \le \hat L :=  C_{\hat L} T \lambda_0^{N+3} a_{c(x)}(\lambda_0)^2. 
\end{equation}
\begin{equation}\label{e:dflip}
\|DF(x) - DF(\tilde x)\|_{X \to Y} \le L\|x - \tilde x\|_X, \quad 
 L:=C_L T \lambda_0^{N+3}a_{c(x)}(\lambda_0)^3 .
\end{equation}
Also, $F$ is weakly sequentially closed, in the sense that if $x_n \to x$ weakly and $F(x_n) \to y$ in $Y$, then $F(x)=y$.
\end{lem}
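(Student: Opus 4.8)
The plan is to treat the two operator-norm bounds \eqref{e:dfbound} and \eqref{e:dflip} as bookkeeping on top of Proposition \ref{p:res}(3), and to isolate weak sequential closedness as the genuinely delicate part. For \eqref{e:dfbound} I would start from the explicit formula for $DF(x)v$ (with $v \in X$ a direction) displayed just above the Lemma, and observe that it has exactly the structure of $\tau_{\partial \Omega} G_{c,c'}f$ in Proposition \ref{p:res}(3), with the multiplier $1 - c'^2c^{-2}$ replaced by $[Dc(x)v]\,c(x)^{-1}$. Since $Dc$ is locally Lipschitz (hence locally bounded) and $c$ is bounded below, on a small ball $\mathcal B$ around $x_0$ one has $\|[Dc(x)v]c(x)^{-1}\|_{L^\infty}\le C\|v\|_X$. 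I would then insert the cutoffs $\chi,\chi_0,\chi_1$ as in the proof of Proposition \ref{p:res}(3), apply Lemma \ref{l:res}(3),(4) with $m=1$ to bound $DF(x)v$ as an operator $L_{\lambda_0}\to H^1(\mathbb R;H^{N-1/2}(\partial\Omega))$ carrying the factor $\lambda_0^{N+3}a_{c(x)}(\lambda_0)^2\|v\|_X$, and finally convert to the $Y$-norm exactly as in \eqref{e:fths}: dominate the Hilbert--Schmidt norm by the operator norm into $H^{N-1/2}(\partial\Omega)$, integrate against $w_T$ over a window of size comparable to $T$, and use $\|\cdot\|_{L^\infty(\mathbb R)}\le\|\cdot\|_{H^1(\mathbb R)}$ in time.

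For \eqref{e:dflip} I would write $DF(x)-DF(\tilde x)$ as a telescoping sum in which one factor at a time is changed: the two resolvents $R_{c(x)}\to R_{c(\tilde x)}$, the multiplier $Dc(x)\to Dc(\tilde x)$, and $c(x)^{-1}\to c(\tilde x)^{-1}$. The two multiplier differences are bounded in $L^\infty$ by $C\|x-\tilde x\|_X$ using the local Lipschitz property of $Dc$ and of $x\mapsto c(x)^{-1}$, and they leave the resolvent structure untouched, so those terms are controlled by $a_{c(x)}(\lambda_0)^2\le a_{c(x)}(\lambda_0)^3$. Each resolvent difference is handled through the resolvent identity \eqref{e:resid}, $R_{c(x)}-R_{c(\tilde x)}=R_{c(x)}(c(x)^2-c(\tilde x)^2)\Delta R_{c(\tilde x)}$, which produces the factor $\|c(x)^2-c(\tilde x)^2\|_{L^\infty}\le C\|x-\tilde x\|_X$ together with one extra resolvent, i.e. the extra power of $a$. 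To reexpress the resulting $a_{c(x)}(\lambda_0)^2a_{c(\tilde x)}(\lambda_0)$ purely in terms of $a_{c(x)}(\lambda_0)^3$ I would shrink $\mathcal B$ and invoke Lemma \ref{l:res}(2) to compare $a_{c(\tilde x)}(\lambda_0)$ with $a_{c(x)}(\lambda_0)$; the one point requiring care here is that the smallness hypothesis in Lemma \ref{l:res}(2) involves $\lambda_0$, so this comparison must be arranged so that $C_L,C_{\hat L}$ and $\mathcal B$ do not secretly depend on $\lambda_0$.

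The main obstacle is weak sequential closedness. Given $x_n\rightharpoonup x$ with $F(x_n)\to y$ in $Y$, it suffices to prove $F(x_n)\rightharpoonup F(x)$ weakly in $Y$, since then uniqueness of weak limits forces $y=F(x)$. By weak sequential continuity of $c$, the functions $g_n:=c(x_n)^2-c(x)^2$ tend to $0$ in the weak-$*$ topology of $L^\infty$ and are uniformly bounded, so for any fixed compactly supported $\psi\in L^2$ one has $g_n\psi\rightharpoonup0$ weakly in $L^2$. I would feed this into a \emph{single} resolvent identity: for fixed $f\in L_{\lambda_0}$ and $\lambda\in[-\lambda_0,\lambda_0]$, $\chi_0(R_{c(x_n)}(\lambda)-R_{c(x)}(\lambda))\chi\hat f(\lambda)=\chi_0 R_{c(x_n)}(\lambda)\chi_0\,g_n\,\psi$ with $\psi:=\Delta R_{c(x)}(\lambda)\chi\hat f(\lambda)$ a single $L^2$ function independent of $n$ (note only one factor of $g_n$ appears). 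Using a uniform bound on the cutoff resolvents $\chi_0 R_{c(x_n)}(\lambda)\chi_0$ along the sequence, these images are bounded in $H^2$, hence precompact in $L^2_{\mathrm{loc}}$ by Rellich's theorem. To identify the weak limit as $0$, I would pair against a fixed test function and expand the adjoint resolvent $\chi_0 R_{c(x_n)}(\bar\lambda)\chi_0$ by the same identity into a fixed part, which pairs to $0$ against the weakly null $g_n\psi$, plus a precompact part; the resulting coupled term pairs a weakly null sequence against a sequence that is strongly convergent along subsequences, so a subsequence argument yields full convergence to $0$. Taking traces and using dominated convergence in $\lambda$ over $[-\lambda_0,\lambda_0]$ and in $t$ over the window, together with a Hilbert--Schmidt argument, promotes this to $F(x_n)\rightharpoonup F(x)$ in $Y$.

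The hard part is precisely the simultaneous variation of the operator $R_{c(x_n)}$ and its argument $g_n\psi$: mere weak-$*$ convergence of $g_n$ does not survive being multiplied by an $n$-dependent factor, and the remedy is to never identify a strong limit but instead to pair a weakly null sequence against a precompact one, which kills the coupling. A subsidiary difficulty is securing the uniform bound on the cutoff resolvents $\chi_0 R_{c(x_n)}(\lambda)\chi_0$ along the sequence; this is not a consequence of weak convergence alone, and I would obtain it from the resolvent control of \S\ref{s:res} (in the setting of the Theorem, where $c$ restricts to a finite-dimensional space and weak convergence is strong, this bound, and indeed the whole statement, is automatic).
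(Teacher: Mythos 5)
Your proposal is correct and takes essentially the same route as the paper: \eqref{e:dfbound} by running the computation \eqref{e:fths} with Proposition \ref{p:res}(3) in place of \eqref{e:cc'}; \eqref{e:dflip} by telescoping one factor at a time and converting resolvent differences via \eqref{e:resid}--\eqref{e:resid2} (the paper's ``sum of three differences'' after adding and subtracting one term); and weak sequential closedness by applying the resolvent identity exactly once so that a single factor of $g_n=c(x_n)^2-c(x)^2$ appears against $n$-independent data, with the remaining $n$-dependent resolvent transferred onto the test function and handled by compactness plus a subsequence-of-subsequences argument, which is precisely the paper's mechanism. Two small corrections to your write-up: your $\psi=\Delta R_{c(x)}(\lambda)\chi\hat f(\lambda)$ is not globally an $L^2$ function (it contains the surface term $-c(x)^{-2}\chi\hat f(\lambda)$), but this term is annihilated by $g_n$, which is supported in $\overline{\Omega_\varepsilon}$ away from $\supp\chi$ --- exactly what \eqref{e:resid2} encodes --- and the uniform bound on $\chi_0R_{c(x_n)}(\lambda)\chi_0$ along the sequence should come from Lemma \ref{l:res}(2) after shrinking $\mathcal B$ (as the paper implicitly does), not from \S\ref{s:res}, whose structural hypotheses on $\partial_r c$ are not assumed in the setting of \S\ref{s:land}.
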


\begin{proof}
Arguing as in \eqref{e:fths}, but using \eqref{e:frechest} in place of \eqref{e:cc'}, we have
\[\begin{split}
\|D&F(x)(x-\tilde x)\|^2_Y \\&\le C T \sup_f \left\{ \left\|  \tau_{\partial \Omega}  \int e^{-i\cdot \lambda}R_{c(x)}(\lambda)[Dc(x)(\tilde x- x)]c(x)^{-1}\lambda^2 R_{c(x)}(\lambda)\hat f(\lambda) d\lambda\right \|^2_{H^1(\mathbb R; H^{N-1/2}(\partial \Omega))}\right\}
\\& \le C T \lambda_0^{2N+6}a_{c(x)}(\lambda_0)^4 \|Dc(x)(\tilde x- x)\|_{L^\infty}^2 \le
 C T \lambda_0^{2N+6}a_{c(x)}(\lambda_0)^4 \|x -\tilde x\|_{X}^2,
\end{split}\]
where the $\sup$ is again taken over $f \in L_{\lambda_0}$ with $\|f\|_{L^2(\mathbb R; H^{-1/2}(\partial \Omega))} = 1$. This implies \eqref{e:dfbound}.

Let $h \in X$ have $\|h\|_X=1$ and $f \in L_{\lambda_0}$ have $\|f\|_{L^2(\mathbb R; H^{-1/2}(\partial \Omega))} = 1$. To save space, write $c$ for $c(x)$ and $\tilde c$ for $c(\tilde x)$. Then
\[
(DF(x) - DF(\tilde x))(h)f = \frac 1 \pi \tau_{\partial \Omega}  \int e^{-it \lambda}\left[R_{c}(\lambda)[(D c) h]c^{-1}R_{c}(\lambda) - R_{\tilde c}(\lambda)[(D \tilde c) h]{\tilde c}^{-1}R_{\tilde c}(\lambda)\right]\lambda^2\hat f(\lambda) d\lambda.
\]
Adding and subtracting $ \frac 1 \pi \tau_{\partial \Omega}  \int e^{-it \lambda} R_{c}(\lambda)[(D \tilde c) h]c^{-1}R_{c}(\lambda)\lambda^2\hat f(\lambda) d\lambda$, we estimate this in pieces. First, using $\|(Dc - D\tilde c)h\|_{L^\infty} \le C \|x-\tilde x\|_X$ (since $c$ is Fr\'echet differentiable with locally Lipschitz derivative), we have
\[
\left\| \tau_{\partial \Omega}  \int e^{-it \lambda}R_{c}(\lambda)[(D c - D{\tilde c}) h]c^{-1}R_{c}(\lambda) \lambda^2\hat f(\lambda) d\lambda\right\|_{H^1(\mathbb R; H^{N-1/2}(\partial \Omega))} \le  C \lambda_0^{N+3}a_{c(x)}(\lambda_0)^2\|x-\tilde x\|_X.
\]
Second
\[
R_{c}(\lambda)[(D \tilde c) h]c^{-1}R_{c}(\lambda) - R_{\tilde c}(\lambda)[(D \tilde c) h]{\tilde c}^{-1}R_{\tilde c}(\lambda)
\]
can be written as a sum of three differences similarly (using \eqref{e:resid} and \eqref{e:resid2}), giving
\[
\|(DF(x) - DF(\tilde x))(h)f\|_Y \le C \lambda_0^{N+3}a_{c(x)}(\lambda_0)^3\|x-\tilde x\|_X.
\]
To prove that $F$ is weakly sequentially closed, it is enough to show that if $x_n  \to x$ weakly, then $F(x_n)f$ tends to $F(x)f$ in the sense of distributions on $\mathbb R \times \partial \Omega$. As in the proof of Proposition \ref{p:res}(2), we write
\[
F(x_n)f - F(x)f = \tau_{\partial \Omega} \int e^{-it\lambda}  R_{c(x_n)}(\lambda) (c(x_n)^2 - c(x)^2) g(\lambda)d\lambda, \quad 
g(\lambda) := - \frac {\lambda^2 R_{c(x)}(\lambda) \hat f(\lambda)} {2\pi c(x)^2},
\]
and observe that $\|g(\lambda)\|_{L^\infty(\Omega_\varepsilon)}$ is uniformly bounded for $\lambda \in [-\lambda_0,\lambda_0]$. Pairing with $\varphi \in C_c^\infty(\mathbb R \times \partial \Omega)$ gives
\[
\langle F(x_n)f - F(x)f, \varphi \rangle = \int_{-\lambda_0}^{\lambda_0}  \int_{\Omega_{\varepsilon}} (c(x_n)^2 - c(x)^2) g(\lambda) \psi_n(\lambda) d\tilde xd\lambda, 
\quad 
\psi_n(\lambda):=R_{c(x_n)}(\lambda)\hat \varphi (\lambda),
\]
where $\tilde x$ is a coordinate in $\mathbb R^3$
By Lemma \ref{l:res}(4), for each $M \in \mathbb N$ the norm $\|\psi_n(\lambda)\|_{H^M(\Omega_\varepsilon)}$ is uniformly bounded for $\lambda \in [-\lambda_0,\lambda_0]$ and $n \in \mathbb N$. Passing to a subsequence, $\psi_n(\lambda)$ converges uniformly in $H^M(\Omega_\varepsilon)$ to a limit, which we denote $\psi(\lambda)$. Then, as $n \to \infty$,
\[
\left| \int_{-\lambda_0}^{\lambda_0} \int_{\Omega_{\varepsilon}} (c(x_n)^2 - c(x)^2) g(\lambda) (\psi_n(\lambda) - \psi(\lambda))d \tilde xd\lambda\right| \le C(\lambda_0)\sup_{\lambda} \|\psi_n(\lambda) - \psi\|_{L^\infty} \to 0,
\]
and
\[
 \int_{-\lambda_0}^{\lambda_0} \int_{\Omega_{\varepsilon}} (c(x_n)^2 - c(x)^2) g(\lambda)  \psi(\lambda)d \tilde xd\lambda \to 0
\]
by the dominated convergence theorem, since $\int_{\Omega_{\varepsilon}} (c(x_n)^2 - c(x)^2) g(\lambda)  \psi(\lambda)d \tilde x$ is uniformly bounded in $\lambda$ and $n$ and tends to $0$ for each $\lambda$ since $c(x_n) \to c(x)$ weakly (this follows from $x_n \to x$ weakly since $c$ is weakly sequentially continuous).
This shows that every subsequence of $\langle F(x_n)f - F(x)f, \varphi \rangle$ has a subsequence which tends to $0$, proving that the original sequence tends to $0$.
\end{proof}

\subsection{Hilbert-Schmidt misfit functional and gradient}

We use the misfit functional
\[
\|F(x) - F(x^\dagger)\|_Y^2 = \sum_{j=1}^\infty \int \|\tau_{\partial \Omega} (U_{c(x)} - U_{c(x^\dagger)})\psi_j\|^2_{H^r(\partial \Omega)} w_T dt,
\]
where $x^\dagger \in \mathcal B$ is the ``true'' model, and where $\{\psi_j\}_{j=1}^\infty$ is any orthonormal basis of $L_{\lambda_0}$.

Writing $c$ for $c(x)$ , the gradient $DF(x)^*$ is given by
\[\begin{split}
\langle DF(x)^*&(y-\tilde y),(x-\tilde x)  \rangle_X = \langle (y-\tilde y),DF(x)(x-\tilde x)  \rangle_Y \\
&= \frac 1 \pi \sum_{j=1}^\infty \int \left\langle (y-\tilde y)\psi_j, \tau_{\partial \Omega}  \int e^{-it \lambda}R_{c}(\lambda)[(Dc)(x- \tilde x)]c^{-1}\lambda^2 R_{c}(\lambda)\widehat{\psi_j}(\lambda) d\lambda  \right\rangle_{H^r(\partial \Omega)} w_T dt\\
&= \sum_{j=1}^\infty \int \!\!\!\int \frac {(Dc)(x - \tilde x)} {\pi c}  \lambda^2 R_{c}(-\lambda)[(1 - \Delta_{\partial \Omega})^{r}w_T (y - \tilde y)\psi_j]\widehat{\phantom{x}}(\lambda) \left[R_{c}(-\lambda) \overline{\widehat{\psi_j}}(\lambda) \right] d\lambda d \tilde x,
\end{split}\]
where $\tilde x$ is a coordinate on $\mathbb R^3$ and
 $\Delta_{\partial \Omega}$ is the nonpositive Laplacian on $\partial \Omega$.
Writing $c_m$ for $c(x_m)$, we have
\[\begin{split}
D&F(x_m)^*(F(x_m)-F(x^\dagger)) =  \\
&(Dc_m)^* \sum_{j=1}^\infty  \frac 1{\pi c_m} \int  \lambda^2 \left[R_{c_m}(-\lambda)[(1 - \Delta_{\partial \Omega})^{r}w_T (F(x_m)-F(x^\dagger))\psi_j]\widehat{\phantom{x}}(\lambda) \right]\left[R_{c_m}(-\lambda) \overline{\widehat{\psi_j}}(\lambda) \right]d\lambda. 
\end{split}\]
Note that in the Theorem $c$ is an inclusion map, so $(D c_m)^*$ is a projection (onto piecewise constant wavespeeds).

\subsubsection*{Adjoint state equation}

The first factor in the cross correlation integral above can be written as the solution to an inhomogeneous wave equation solved backwards in time, namely
\[
u(t) := \frac 1 {2\pi} \int e^{-it\lambda}R_{c(x_m)}(-\lambda)[(1 - \Delta_{\partial \Omega})^{r}w_T (F(x_m)-F(x^\dagger))\psi_j]\widehat{\phantom{x}}(\lambda) d\lambda
\]
is the backwards in time solution to
\[
(\partial_t^2 - c^2\Delta)u =  \left[(1 - \Delta_{\partial \Omega})^{r}w_T (F(x_m)-F(x^\dagger))\psi_j\right](-t).
\]
In \cite{bss}, Blazek, Stolk and Symes show that the
analogous equation for a problem without frequency ``bandlimitation'' does not have
a solution.

  \subsection{Landweber iteration}

 We define the Landweber iterates by the equation
\begin{equation}\label{e:landit}
x_{m+1} = x_m - \mu DF(x_m)^*(F(x_m)-F(x^\dagger)),
\end{equation}
where the step size $\mu$ is sufficiently small (see \cite[(3.5)]{dqs}.

Suppose the inversion has uniform H\"older-type stability in the sense that there is a constant $C_F$ such that for every $x, \ \tilde x \in \mathcal B$ we have
\begin{equation}\label{e:hstab}
\frac 1 {\sqrt 2} \|x - \tilde x\|_X \le C_F \|F(x) - F(\tilde x)\|_Y^{\frac{1+\epsilon}2},
\end{equation}
for some $\epsilon \in (0,1]$. Note that below we will only use the case $\epsilon=1$. Then \cite[Theorem 3.2]{dqs} applies and the Landweber iteration converges: see the next section for details.

\section{Proof of Theorem and Discussion: Convergence}\label{s:conv}

In this section we work under the assumptions of \S\ref{s:land} and the additional assumptions of the Theorem; in particular $X = \mathbb R^N$, $w$ is the chacteristic function of $[-1,1]$, $T=\pi/2\lambda_0$, and $r=1/2$. We apply the following convergence result, a consequence of \cite[Theorem~3.2]{dqs}.

\begin{prop}[{\cite[Theorem 3.2]{dqs}}]\label{p:dqs}
Let $X$ and $Y$ be Hilbert spaces, $\mathcal B$ a closed ball in $X$, and $F\colon \mathcal B \to Y$ continuous and Fr\'echet differentiable with Lipschitz derivative. Suppose further that $F$ is weakly sequentially closed and that there are constants $L,\ \hat L, \ C_F\ge1$  such that
\[
\|DF(x)\|_{X \to Y}\le \hat L, \quad \|DF(x)- DF(\tilde x)\|_{X \to Y}\le L,
\]
and \eqref{e:hstab} hold for all $x, \ \tilde x \in \mathcal B$, with $\epsilon=1$ in the case of \eqref{e:hstab}. Let $\mu \in(0, \min\{1/2 \hat L^2,4C_F^2\})$ and let $\mathcal B_1 \subset \mathcal B$ be a closed ball of radius  $R \le 1/2C_F\sqrt{L\hat L}$. Then for any $x_0, x^\dagger \in \mathcal B_1$, the sequence of Landweber iterates, defined by \eqref{e:landit}, converges to $x^\dagger$ at the following exponential rate:
\[
\|x_k - x^\dagger\|_X \le R \left(1 - \frac \mu{4C_F^2}\right)^{k/2}.
\]
\end{prop}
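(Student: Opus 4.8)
The plan is to recognize this as the specialization of \cite[Theorem~3.2]{dqs} to the case of exact data and H\"older exponent $\epsilon=1$, so that in principle it suffices to check the standing hypotheses of that theorem (continuity, Fr\'echet differentiability with Lipschitz derivative, weak sequential closedness, the two derivative bounds, and the stability estimate \eqref{e:hstab}), all of which are assumed. For completeness I would nevertheless reproduce the self-contained energy argument that drives the convergence. Write $e_m := x_m - x^\dagger$ and $d_m := F(x_m) - F(x^\dagger)$. Squaring the defining relation \eqref{e:landit} gives
\[
\|e_{m+1}\|_X^2 = \|e_m\|_X^2 - 2\mu\,\langle DF(x_m)e_m,\, d_m\rangle_Y + \mu^2\|DF(x_m)^*d_m\|_X^2.
\]

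First I would control the linearization (``tangential cone'') error. By the fundamental theorem of calculus along the segment from $x^\dagger$ to $x_m$ (which lies in $\mathcal B$ by convexity) one has $d_m = \int_0^1 DF(x^\dagger + s e_m)e_m\,ds$, whence
\[
\|DF(x_m)e_m - d_m\|_Y = \Big\|\int_0^1 \big(DF(x^\dagger + e_m) - DF(x^\dagger + s e_m)\big)e_m\,ds\Big\|_Y \le \tfrac12 L\,\|e_m\|_X^2,
\]
using the Lipschitz bound on $DF$. Hence $\langle DF(x_m)e_m, d_m\rangle_Y \ge \|d_m\|_Y^2 - \tfrac12 L\|e_m\|_X^2\|d_m\|_Y$. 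The last term in the expansion is handled by $\|DF(x_m)^*d_m\|_X \le \hat L\|d_m\|_Y$: choosing $\mu \le 1/2\hat L^2$ gives $\mu\hat L^2\le\tfrac12$, so that $-2\mu\|d_m\|_Y^2 + \mu^2\hat L^2\|d_m\|_Y^2 \le -\tfrac{3\mu}{2}\|d_m\|_Y^2$. Collecting terms yields $\|e_{m+1}\|_X^2 \le \|e_m\|_X^2 - \tfrac{3\mu}{2}\|d_m\|_Y^2 + \mu L\|e_m\|_X^2\|d_m\|_Y$.

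The loop is then closed with \eqref{e:hstab}: for $\epsilon=1$ it reads $\|e_m\|_X \le \sqrt 2\,C_F\|d_m\|_Y$, which both bounds the residual below by the error, $\|d_m\|_Y^2 \ge \|e_m\|_X^2/2C_F^2$, and rewrites the nonlinear cross term via $\|e_m\|_X^2\|d_m\|_Y \le R\,\|e_m\|_X\|d_m\|_Y \le \sqrt 2\,C_F R\,\|d_m\|_Y^2$, where $\|e_m\|_X\le R$ on the ball $\mathcal B_1$. Taking $R$ small enough that $\sqrt 2\,L C_F R \le 1$ (comparable to the stated $R \le 1/2C_F\sqrt{L\hat L}$) absorbs the cross term, leaving a coefficient $-\tfrac32\mu + \mu = -\tfrac12\mu$ on $\|d_m\|_Y^2$; inserting the lower bound on $\|d_m\|_Y^2$ then gives the strict contraction $\|e_{m+1}\|_X^2 \le (1 - \mu/4C_F^2)\|e_m\|_X^2$. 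In particular $\|e_{m+1}\|_X \le \|e_m\|_X$, so the iterates never leave $\mathcal B_1 \subset \mathcal B$ and the hypotheses persist at every step; iterating yields the claimed rate $\|e_k\|_X \le R(1 - \mu/4C_F^2)^{k/2}$, with the condition $\mu<4C_F^2$ ensuring the base lies in $(0,1)$.

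The main obstacle is the balancing in this last step: the linearization error is genuinely quadratic in $\|e_m\|_X$, so it can be dominated by the good negative term $-\mu\|d_m\|_Y^2$ only after the stability estimate has traded powers of $\|e_m\|_X$ for powers of $\|d_m\|_Y$ and the radius $R$ has been taken small. This is exactly what forces the smallness restrictions on $R$ and $\mu$ and confines the result to a neighborhood of $x^\dagger$. The weak sequential closedness hypothesis is not needed for the geometric rate inside the ball but is part of the general framework of \cite{dqs} (ensuring well-posedness of the scheme and attainment of the limit), and tracking the numerical constants to match the precise stated bounds on $\mu$ and $R$ is a matter of careful but routine bookkeeping.
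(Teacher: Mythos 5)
The paper offers no proof of this Proposition at all: it is quoted as a black box from \cite[Theorem 3.2]{dqs}, and the surrounding text only verifies its hypotheses (via Lemma \ref{l:land} and \eqref{e:hstab}) in the application. So your self-contained energy argument is, relative to this paper, a different route, though it is the standard method behind the cited result: expand $\|e_{m+1}\|_X^2$, bound the linearization error by $\tfrac12 L\|e_m\|_X^2$ using the fundamental theorem of calculus and the Lipschitz bound on $DF$ (you correctly read the printed hypothesis $\|DF(x)-DF(\tilde x)\|\le L$ as the Lipschitz bound $\le L\|x-\tilde x\|_X$, which is what Lemma \ref{l:land} actually supplies), absorb the quadratic step term using $\mu\hat L^2\le\tfrac12$, and close the loop with \eqref{e:hstab}. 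That skeleton is correct.

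There is, however, a genuine gap in the constant bookkeeping, and it is not "routine" as you claim. Your absorption of the cross term requires $\sqrt2\,L C_F\|e_m\|_X\le1$, and you assert this is "comparable" to the stated radius bound $R\le 1/2C_F\sqrt{L\hat L}$. It is not: that bound only gives $\sqrt2\,LC_FR\le \sqrt{L/\hat L}/\sqrt2$, which is $\le 1$ only when $L\le 2\hat L$. The constants $L$ (Lipschitz constant of $DF$) and $\hat L$ (bound on $\|DF\|_{X\to Y}$) are independent parameters, and in this paper's own application one has $L/\hat L \sim a_{c(x)}(\lambda_0)\gg 1$ by Lemma \ref{l:land}, so the regime where your implication fails is exactly the regime of interest here. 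As written, your argument proves a variant of the Proposition with the stronger smallness condition $R\lesssim 1/LC_F$ in place of $R\le 1/2C_F\sqrt{L\hat L}$; matching the stated constant requires bringing the upper bound $\|d_m\|_Y\le\hat L\|e_m\|_X$ (a consequence of $\|DF\|\le\hat L$) into the absorption step, which your proof never uses.

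A second, smaller gap is the induction keeping the hypotheses valid along the iteration. Two points of a closed ball of radius $R$ can be $2R$ apart, so $x_0,x^\dagger\in\mathcal B_1$ only gives $\|e_0\|_X\le 2R$, not $\|e_0\|_X\le R$ (a tension already present in the statement itself, whose conclusion at $k=0$ forces $\|e_0\|_X\le R$). More importantly, monotonicity $\|e_{m+1}\|_X\le\|e_m\|_X$ confines the iterates to the ball of radius $\|e_0\|_X$ \emph{centered at $x^\dagger$}, which is not contained in $\mathcal B_1$ (whose center is a different point); so your claim that "the iterates never leave $\mathcal B_1$" is false as stated. What one needs is that the iterates stay in $\mathcal B$, where \eqref{e:hstab}, the derivative bounds, and the segment used in the fundamental theorem of calculus all live; this holds if $\mathcal B$ exceeds $\mathcal B_1$ in radius by at least $2R$, but the statement permits $\mathcal B_1=\mathcal B$, so this step needs to be formulated and justified correctly rather than waved through.
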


Now the Theorem follows from Proposition \ref{p:dqs}. All the assumptions apart from \eqref{e:hstab} follow from Lemma \ref{l:land}, and \eqref{e:hstab} is deduced below from \cite[Theorem 2.7]{BdHQ}. To prove part (2) of the Theorem, we observe that \eqref{e:expres} implies that $L,\ \hat L$ grow at most exponentially in $\lambda_0$, while the proof of \eqref{e:hstab} below shows that $C_F$ grows at most like  $\sqrt{\lambda_0}$.

\begin{proof}[Proof of \eqref{e:hstab}] 
To save space we write $c = c(x), \ \tilde c = c(\tilde x)$.
Fix $\lambda_1 \in (0,1)$ such that $\lambda_1^2$ is not a Dirichlet eigenvalue of $-\Delta$ or of $-\Delta + \lambda_1^2(1-c^{-2})$ or of $-\Delta + \lambda_1^2(1-\tilde c^{-2})$ for any $x, \tilde x \in \mathcal B$ (this can be arranged by taking the ball $\mathcal B$ small enough).

As an orthonormal basis of $L_{\lambda_0}$, take $\{\psi_j\}_{j=1}^\infty$ such that 
$
\{\hat \psi_j\}_{j=1}^\infty = \{\hat a_\ell b_m\}_{(\ell, m) \in \mathbb Z \times \mathbb N},
$
 where each $\hat a_\ell$ is the characteristic function of $[-\lambda_0,\lambda_0]$ multiplied by $e^{-i \ell \pi \lambda/\lambda_0}/\sqrt{2\lambda_0}$, and  $\{b_m\}_{m=1}^\infty$ is an orthonormal basis of $H^{-1/2}(\partial \Omega)$ with $b_1$ chosen such that
\[
\frac{\| (\mathcal{S}_{\lambda_1^2(1-c^{-2}),\lambda_1} - \mathcal{S}_{\lambda_1^2(1-\tilde c^{-2}),\lambda_1})b_1\|_{H^{1/2}(\partial\Omega)}}{\| (\mathcal{S}_{\lambda_1^2(1-c^{-2}),\lambda_1} - \mathcal{S}_{\lambda_1^2(1-\tilde c^{-2}),\lambda_1})\|_{H^{-1/2}(\partial\Omega) \to H^{1/2}(\partial\Omega)}} \ge 1/2.
\]
By \eqref{e:equivstab} we have
\[
\frac{ \| (\mathcal{S}_{\lambda_1^2(1-c^{-2}),\lambda_1} - \mathcal{S}_{\lambda_1^2(1-\tilde c^{-2}),\lambda_1})\|_{H^{-1/2}(\partial\Omega) \to H^{1/2}(\partial\Omega)}}  {\| (\Lambda_{-\lambda_1^2c^{-2}} - \Lambda_{-\lambda_1^2\tilde c^{-2}})\|_{H^{1/2}(\partial\Omega) \to H^{-1/2}(\partial\Omega)}} \ge 
1/C,
 \]
 where we used the fact that if the ball $\mathcal B$ is small enough then $\| \mathcal{S}_{\lambda_1^2(1-c^{-2}),\lambda_1}\|/2 \le   \|\mathcal{S}_{\lambda_1^2(1-\tilde c^{-2}),\lambda_1}\| \le 2\| \mathcal{S}_{\lambda_1^2(1-c^{-2}),\lambda_1}\|$ by \eqref{e:neu}.
 By  \cite[Theorem 2.7]{BdHQ}, we have
 \[
 \| (\Lambda_{-\lambda_1^2c^{-2}} - \Lambda_{-\lambda_1^2\tilde c^{-2}})\|_{H^{1/2}(\partial\Omega) \to H^{-1/2}(\partial\Omega)} \ge \|x-\tilde x\|_X/C.
 \]
  On the other hand, if $J$ is a sufficiently small open interval containing $\lambda_1$, we have
  \[
  \min_{\lambda \in J}\|\tau_{\partial \Omega} (R_{c}(\lambda) - R_{\tilde c}(\lambda)) b_1\|_{H^{1/2}(\partial\Omega)} \ge \|\tau_{\partial \Omega} (R_{c}(\lambda_1) - R_{\tilde c}(\lambda_1))b_1\|_{H^{1/2}(\partial\Omega)}/2.
  \]
 Meanwhile, since $\hat a_\ell(\lambda) = e^{-i\ell \pi \lambda/\lambda_0} \hat a_0(\lambda)$, we have $U_c a_\ell b_m (t) =U_c a_0 b_m (t+\ell\pi/\lambda_0)$, so that
 \[
  \|F(x) - F(\tilde x)\|^2_Y \ge \sum_{\ell \in \mathbb Z} \int \|\tau_{\partial \Omega} (U_{c} - U_{\tilde c})a_\ell b_1\|_{H^{1/2}(\partial\Omega)}^2 w_T dt = \int \|\tau_{\partial \Omega} (U_{c} - U_{\tilde c})a_0 b_1\|_{H^{1/2}(\partial\Omega)}^2 dt.
 \]
 Putting these estimates together gives
  \[\begin{split}
  \|F(x) - F(\tilde x)\|^2_Y &\ge  \frac 1 {8\pi^2\lambda_0} \int_{-\lambda_0}^{\lambda_0} \|\tau_{\partial \Omega} (R_{c}(\lambda) - R_{\tilde c}(\lambda))b_1\|_{H^{1/2}(\partial\Omega)}^2 d\lambda\\
  & \ge \|\tau_{\partial \Omega} (R_{c}(\lambda_1) - R_{\tilde c}(\lambda_1))b_1\|_{H^{1/2}(\partial\Omega)}^2/C\lambda_0 \\
  & =  \| (\mathcal{S}_{\lambda_1^2(1-c^{-2}),\lambda_1} - \mathcal{S}_{\lambda_1^2(1-\tilde c^{-2}),\lambda_1})b_1\|_{H^{1/2}(\partial\Omega)}^2/C \lambda_0  \\
  & \ge \|x-\tilde x\|_X^2/C \lambda_0.
  \end{split}\]
\end{proof}

\def\arXiv#1{\href{http://arxiv.org/abs/#1}{arXiv:#1}}

\end{document}